\providecommand{\U}[1]{\protect\rule{.1in}{.1in}}
\providecommand{\U}[1]{\protect\rule{.1in}{.1in}}
\newtheorem{Th}{Theorem}[section]
\newtheorem{Lem}[Th]{Lemma}
\newtheorem{Cor}[Th]{Corollary}
\newtheorem{Prop}[Th]{Proposition}
\newtheorem{Def-Prop}[Th]{Definition-Proposition}
\newtheorem{conj}[Th]{Conjecture}
\theoremstyle{definition}
\newtheorem{Exa}[Th]{Example}
\theoremstyle{remark}
\newtheorem{Rem}[Th]{Remark}
\begin{document}
\title[Isomorphic induced modules and Dynkin diagram automorphisms]{%
Isomorphic induced modules and Dynkin diagram automorphisms of semisimple
Lie algebras}
\date{}
\author{J\'{e}r\'{e}mie Guilhot and C\'{e}dric Lecouvey}
\maketitle

\begin{abstract}
Consider a simple Lie algebra $\mathfrak{g}$ and $\overline{\mathfrak{g}}%
\subset \mathfrak{g}$ a Levi subalgebra. Two irreducible $\overline{%
\mathfrak{g}}$-modules yield isomorphic inductions to $\mathfrak{g}$ when
their highest weights coincide up to conjugation by an element of the Weyl
group $W$ of $\mathfrak{g}$ which is also a Dynkin diagram automorphism of $%
\overline{\mathfrak{g}}$. In this paper we study the converse problem: given
two irreducible $\overline{\mathfrak{g}}$-modules of highest weight $\mu $
and $\nu $ whose inductions to $\mathfrak{g}$ are isomorphic, can we
conclude that $\mu $ and $\nu $ are conjugate under the action of an element
of $W$ which is also a Dynkin diagram automorphism of $\overline{\mathfrak{g}%
}$ ? We conjecture this is true in general. We prove this conjecture in type 
$A$ and, for the other root systems, in various situations providing $\mu $
and $\nu $ satisfy additional hypotheses. Our result can be interpreted as
an analogue for branching coefficient of the main result of \cite{Raj} on
tensor product multiplicities. 
\end{abstract}

\section{Introduction}

Let $\mathfrak{g}$ be a simple Lie algebra over $\mathbb{C}$ and $\overline{%
\mathfrak{g}}$ a Levi subalgebra. Let $\mu$ and $\nu$ be two dominant
integral weights for $\overline{\mathfrak{g}}$.\ Denote by $\overline{V}%
(\mu) $ and $\overline{V}(\nu)$ the associated highest weight $\overline{%
\mathfrak{g}}$-modules. Let $\overline{V}(\mu)\uparrow _{\overline{\mathfrak{%
g}}}^{\mathfrak{g}}$ and $\overline{V}(\nu )\uparrow_{\overline{\mathfrak{g}}%
}^{\mathfrak{g}}$ be the $\mathfrak{g}$-modules obtained by induction from $%
\overline{\mathfrak{g}}$.\ When $\mu$ and $\nu$ are conjugate by an element
of the Weyl group $W$ of $\mathfrak{g}$ which is also a Dynkin diagram
automorphism of $\overline{\mathfrak{g}}$, the modules $\overline{V}%
(\mu)\uparrow_{\overline{\mathfrak{g}}}^{\mathfrak{g}}$ and $\overline{V}%
(\nu)\uparrow_{\overline{\mathfrak{g}}}^{\mathfrak{g}}$ are isomorphic; see
Proposition \ref{Prop_direct}. In this paper, we address the following
question: assume $\overline{V}(\mu)\uparrow_{\overline{\mathfrak{g}}}^{%
\mathfrak{g}}$ and $\overline{V}(\nu)\uparrow_{\overline{\mathfrak{g}}}^{%
\mathfrak{g}}$ are isomorphic, can we conclude that $\mu$ and $\nu$ are
conjugate by an element of the Weyl group $W$ of $\mathfrak{g}$ which is
also a Dynkin diagram automorphism of $\overline{\mathfrak{g}}$ ?

It is interesting to reformulate this problem in terms of the (infinite)
matrix $M=(m_{\mu}^{\lambda})$ with columns and rows labelled respectively
by the dominant weights $\lambda$ of $\mathfrak{g}$ and by the dominant
weights $\mu$ of $\overline{\mathfrak{g}}$. Here $m_{\mu}^{\lambda}$ denotes
the branching coefficient corresponding to the multiplicity of the
irreducible highest weight $\mathfrak{g}$-module $V(\lambda)$ in $\overline{V%
}(\mu)\uparrow_{\overline{\mathfrak{g}}}^{\mathfrak{g}}$ (or equivalently
the multiplicity of $\overline{V}(\mu)$ in the restriction of $V(\lambda)$
to $\overline{\mathfrak{g}}$).\ We then ask if two rows of the matrix $M$
can be equal. Note that two distinct columns of $M$ labelled by $\lambda$
and $\mu$ of $M$ cannot coincide since this would imply $V(\lambda)\simeq
V(\mu)$. Indeed both modules would then have the same weight decomposition
and therefore the same characters.

The matrix $M$ contains inner multiplicities associated to simple $\mathfrak{%
g}$-modules. We can also address a similar question for the outer (tensor
product) multiplicities.\ The corresponding matrix, say $C$, has columns and
rows labelled by dominant weights of $\mathfrak{g}$ and $k$-tuples $%
(\mu^{(1)},\ldots,\mu^{(k)})$ of such dominant weights.\ The coefficients $%
c_{\mu^{(1)},\ldots,\mu^{(k)}}^{\lambda}$ is then the multiplicity of $%
V(\lambda)$ in $V(\mu^{(1)})\otimes\cdots\otimes V(\mu^{(k)})$. It was
proved by Rajan in \cite{Raj} (see also \cite{VV} for a shorter proof and an
extension to the case of Kac-Moody algebras) that two rows of $C$ are equal
if and only if the associated $k$-tuples of dominant weights coincide up to
permutation. It is also easy to see that if the columns of $C$ labelled by $%
\lambda$ and $\kappa$ coincide, then $\lambda=\kappa$ (take $(\mu
^{(1)},\ldots,\mu^{(k)})=(\lambda,0,\ldots,0)$ and $(\mu^{(1)},\ldots
,\mu^{(k)})=(\kappa,0,\ldots,0)$).

Finally, one can also consider the decomposition matrix $D$ associated to
the modular representation theory of the symmetric group in characteristic $%
p $. Its columns and rows are indexed by $p$-restricted partitions and
partitions of $n$, respectively. The study of possible identical rows and
columns was considered by Wildon in \cite{Wil}: the columns of $D$ are
distinct and its rows can only coincide in characteristic $2$ when the
underlying partitions are conjugate.

\bigskip

In the present paper, we prove that two rows of the matrix $M$ corresponding
to weights conjugate by an element of the Weyl group $W$ of $\mathfrak{g}$
which is also a Dynkin diagram automorphism of $\overline{\mathfrak{g}}$
coincide. We conjecture that the converse is true and prove this conjecture
in various cases (see Theorem~\ref{Th_final}). We believe that the study of
the matrix $M$ is more complicated than that of the matrix $C$ for two main
reasons. First, there could exist infinitely many nonzero coefficients in a
row of $M$ (this is not the case for $C$). Second, the possible
transformations relating the labels corresponding to identical rows in $M$
are more complicated than in the case of the matrix $C$ (where they simply
correspond to permutations of the $k$-tuples of dominant weights).

\bigskip

The paper is organised as follows.\ Section 2 is devoted to some classical
background on representation theory of Lie algebras. In Section 3, we study
the relationships between the roots and the weights of $\mathfrak{g}$ and $%
\overline{\mathfrak{g}}$. In Section 4, we formulate our conjecture in terms
of equality of distinguish functions in the character ring of $\mathfrak{g}$%
. This allows us in Section 5 to prove our conjecture when $\mu$ and $\nu$
are far enough from the walls of the Weyl chamber in which they appear. In
Section~6, we prove the conjecture when $\mu+2\overline{\rho}$ or $\nu+2%
\overline{\rho}$ (where $\overline{\rho}$ denotes the half sum of positive
roots of $\overline{\mathfrak{g}}$) is dominant for $\mathfrak{g}$. Finally,
in Section 7, we prove the conjecture in the case $\mathfrak{g=gl}_{n}$ by
using the main result of Rajan \cite{Raj}. This also permits to establish it
when $\mathfrak{g}$ is a classical Lie algebra of type $B_{n},C_{n}$ or $%
D_{n}$ and $\overline{\mathfrak{g}}\mathfrak{=gl}_{n}$.


\section{Background on Lie algebras}

Let $\mathfrak{g}$ be a simple Lie algebra over $\mathbb{C}$ with triangular
decomposition 
\begin{equation*}
\mathfrak{g=}\bigoplus\limits_{\alpha \in R_{+}}\mathfrak{g}_{\alpha }\oplus 
\mathfrak{h}\oplus \bigoplus\limits_{\alpha \in R_{+}}\mathfrak{g}_{-\alpha }
\end{equation*}%
so that $\mathfrak{h}$ is the Cartan subalgebra of $\mathfrak{g}$ and $R_{+}$
its set of positive roots. The root system $R=R_{+}\sqcup (-R_{+})$ of $%
\mathfrak{g}$ is realised in a real Euclidean space $E$ with inner product $%
(\cdot ,\cdot )$.\ For any $\alpha \in R,$ we write $\alpha ^{\vee }=\frac{%
2\alpha }{(\alpha ,\alpha )}$ for its coroot. Let $S\subset R_{+}$ be the
subset of simple roots. The set $P$ of integral weights for $\mathfrak{g}$
satisfies $(\beta ,\alpha ^{\vee })\in \mathbb{Z}$ for any $\beta \in P$ and 
$\alpha \in R$. We write $P_{+}=\{\beta \in P\mid (\beta ,\alpha ^{\vee
})\geq 0$ for any $\alpha \in S\}$ for the cone of dominant weights of $%
\mathfrak{g}$. Let $W$ be the Weyl group of $\mathfrak{g}$ generated by the
reflections $s_{\alpha }$ with $\alpha \in R_{+}$ (or equivalently by the
simple reflections $s_{\alpha }$ with $\alpha \in S$). Set $C=\{x\in E\mid
(x,\alpha ^{\vee })>0\}$ and $\overline{C}=\{x\in E\mid (x,\alpha ^{\vee
})\geq 0\}$.\ For any $w\in W$, we set 
\begin{equation*}
C_{w}=w^{-1}(C),\quad \overline{C}_{w}=w^{-1}(\overline{C})\quad \text{and}%
\quad P_{+}^{w}=P\cap \overline{C}_{w}.
\end{equation*}%
Each set $w^{-1}(S)$ can be chosen as a set of simple roots for $R$, the
corresponding set of positive roots is then $R_{+}^{w}=w^{-1}(R_{+})$. Given 
$w\in W$, we define the dominance order $\leq _{w}$ on $P$ by the following
relation: $\gamma \leq _{w}\beta $ if and only if $\beta -\gamma $
decomposes as a sum of roots in $R_{+}^{w}$. When $w=1$, we simply write $%
\leq $ for the order $\leq _{1}$.

Now consider a subset of simple roots $\overline{S}\subset S$.\ Write $%
\overline{R}\subset R$ for the parabolic root system generated by $\overline{%
S}$ and $\overline{R}_{+}=\overline{R}\cap R_{+}$ the corresponding set of
positive roots.$\ $Let $\overline{\mathfrak{g}}\subset\mathfrak{g}$ be the
Levi subalgebra of $\mathfrak{g}$ with set of positive roots $\overline {R}%
_{+}$ and triangular decomposition 
\begin{equation*}
\overline{\mathfrak{g}}\mathfrak{=}\bigoplus\limits_{\alpha\in\overline{R}%
_{+}}\mathfrak{g}_{\alpha}\oplus\mathfrak{h}\oplus\bigoplus\limits_{\alpha
\in\overline{R}_{+}}\mathfrak{g}_{-\alpha}. 
\end{equation*}
In particular, $\mathfrak{g}$ and $\overline{\mathfrak{g}}$ have the same
Cartan subalgebra. The algebras $\mathfrak{g}$ and $\overline{\mathfrak{g}}$
have the same integral weight lattice $P$.\ The Weyl group $\overline{W}$ of 
$\overline{\mathfrak{g}}$ is generated by the simple reflections $s_{\alpha} 
$ with $\alpha\in\overline{S}$. Denote by $\overline{P}_{+}\subset P$ the
set of dominant integral weights of $\overline{\mathfrak{g}}$. We shall also
need the partial order $\preceq$ on $P$ defined by the following relation: $%
\gamma\preceq\beta$ if and only if $\beta-\gamma$ decomposes as a sum of
roots in $\overline{R}_{+}$.

\begin{Exa}
Consider $\mathfrak{g=sp}_{12}$. We have 
\begin{equation*}
R_{+}=\{\varepsilon_{i}-\varepsilon_{j}\mid1\leq i<j\leq6\}\cup\{\varepsilon
_{i}+\varepsilon_{j} \mid1< i< j\leq6\}\cup\{2\varepsilon_{i}\mid1\leq
i\leq6 \} 
\end{equation*}
and 
\begin{equation*}
P_{+}=\{x=(x_{1},\ldots,x_{6})\in\mathbb{Z} ^{6}\mid x_{1}\geq\cdots\geq
x_{6}\geq0\}. 
\end{equation*}
The Levi subalgebra $\overline{\mathfrak{g}}\subset\mathfrak{g}$ such that 
\begin{equation*}
\overline{R}_{+}=\{\varepsilon_{1}-\varepsilon_{2},\varepsilon_{1}-%
\varepsilon_{3},\varepsilon_{2}-\varepsilon_{3}\}\cup\{\varepsilon_{4}\pm%
\varepsilon_{5},\varepsilon_{4}\pm\varepsilon_{6},\varepsilon_{5}\pm%
\varepsilon_{6}\}\cup\{2\varepsilon_{4},2\varepsilon_{5},2\varepsilon _{6}\} 
\end{equation*}
is then isomorphic to $\mathfrak{gl}_{3}\oplus\mathfrak{sp}_{6}$.
\end{Exa}

Given $\lambda\in P_{+}$, we denote by $V(\lambda)$ the finite dimensional
irreducible representation of $\mathfrak{g}$ with highest weight $\lambda $.$%
\ $Let $s_{\lambda}$ be the character of $V(\lambda)$. This is an element of
the group algebra $\mathbb{Z}[P]$ with basis $\{e^{\beta}\mid\beta\in P\}$.\
More precisely 
\begin{equation*}
s_{\lambda}=\sum_{\mu\in P}\dim V(\lambda)_{\mu}e^{\mu}
\end{equation*}
where $V(\lambda)_{\mu}$ is the weight space in $V(\lambda)$ corresponding
to $\mu$. Set $\mathbb{G=Z}^{W}[P]$. We then have $s_{\lambda}\in\mathbb{G}$%
, that is $s_{\lambda}$ is symmetric under the action of $W$. We also recall
the Weyl character formula%
\begin{equation*}
s_{\lambda}=\dfrac{\sum_{w\in W}\varepsilon(w)e^{w(\lambda+\rho)-\rho}}{%
\prod_{\alpha\in R_{+}}(1-e^{-\alpha})}
\end{equation*}
where $\rho=\frac{1}{2}\sum_{\alpha\in R_{+}}\alpha$. Note that, for any $%
w\in W$ and $\beta\in P$, we have $s_{w(\beta)}=\varepsilon(w)s_{w\circ\beta}
$ where $\circ$ is the dot action of the Weyl group defined by $w\circ
\beta=w(\beta+\rho)-\rho$.

Using the restriction of $V(\lambda)$ to $\overline{\mathfrak{g}}$ we define
the branching coefficients $m_{\mu}^{\lambda}$ by 
\begin{equation*}
s_{\lambda}=\sum_{\mu\in\overline{P}_{+}}m_{\mu}^{\lambda}\overline{s}_{\mu}
\end{equation*}
where $\overline{s}_{\mu}$ is the character of the irreducible
representation $\overline{V}(\mu)$ of $\overline{\mathfrak{g}}$ of highest
weight $\mu$.\ We introduce the partition function $\overline{\mathcal{P}}$
defined by 
\begin{equation*}
\prod_{\alpha\in R_{+}\setminus\overline{R}_{+}}\frac{1}{1-e^{\alpha}}%
=\sum_{\beta\in P}\overline{\mathcal{P}}(\beta)e^{\beta}. 
\end{equation*}
Then, the branching coefficient $m_{\mu}^{\lambda}$ can be computed in term
of $\overline{\mathcal{P}}$ using the Weyl character formula (see \cite[p.
357]{GW}).

\begin{Th}
\label{Th_multi}Let $\lambda\in P_{+}$ and $\mu\in\overline{P}_{+}$.\ Then 
\begin{equation*}
m_{\mu}^{\lambda}=\sum_{w\in W}\varepsilon(w)\overline{\mathcal{P}}%
(w(\lambda+\rho)-\mu-\rho) 
\end{equation*}
where $\varepsilon$ is the sign representation of $W$.
\end{Th}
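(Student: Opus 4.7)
The strategy is to combine the Weyl character formulas for $\mathfrak{g}$ and $\overline{\mathfrak{g}}$ and then extract the coefficient of $e^{\mu}$ from a suitable formal expansion. I would start from the defining branching identity $s_{\lambda} = \sum_{\mu \in \overline{P}_{+}} m_{\mu}^{\lambda}\,\overline{s}_{\mu}$. Using the factorisation
\begin{equation*}
\prod_{\alpha \in R_{+}}(1-e^{-\alpha}) \;=\; \prod_{\alpha\in\overline{R}_{+}}(1-e^{-\alpha})\cdot\prod_{\alpha\in R_{+}\setminus\overline{R}_{+}}(1-e^{-\alpha}),
\end{equation*}
I would multiply the branching identity through by $\prod_{\alpha\in\overline{R}_{+}}(1-e^{-\alpha})$ and replace $s_{\lambda}$ and every $\overline{s}_{\mu}$ by their Weyl character expressions. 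This produces the cleaner identity
\begin{equation*}
\frac{\sum_{w\in W}\varepsilon(w)\,e^{w(\lambda+\rho)-\rho}}{\prod_{\alpha\in R_{+}\setminus\overline{R}_{+}}(1-e^{-\alpha})} \;=\; \sum_{\mu\in\overline{P}_{+}} m_{\mu}^{\lambda}\sum_{\bar w\in\overline{W}}\varepsilon(\bar w)\,e^{\bar w(\mu+\overline{\rho})-\overline{\rho}},
\end{equation*}
to be interpreted in an appropriate completion of the group algebra $\mathbb{Z}[P]$.

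The next step is to substitute the reciprocal of the denominator on the left by the generating series $\sum_{\beta}\overline{\mathcal{P}}(\beta)\,e^{\beta}$ defining the partition function $\overline{\mathcal{P}}$ (matching the sign convention implicit in its definition), which turns the left-hand side into a double sum indexed by $w\in W$ and $\beta$. I would then equate the coefficients of $e^{\mu}$ on both sides for an arbitrary fixed $\mu \in \overline{P}_{+}$. On the right-hand side, since $\mu + \overline{\rho}$ lies in the strictly dominant chamber of $\overline{\mathfrak{g}}$, the equation $\bar w(\mu'+\overline{\rho}) - \overline{\rho} = \mu$ with $\mu' \in \overline{P}_{+}$ and $\bar w \in \overline{W}$ forces $\bar w = 1$ and $\mu' = \mu$; hence this coefficient is exactly $m_{\mu}^{\lambda}$. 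On the left-hand side, the constraint $w(\lambda+\rho) - \rho + \beta = \mu$ determines $\beta$ uniquely for each $w$, and one reads off the alternating sum $\sum_{w\in W}\varepsilon(w)\,\overline{\mathcal{P}}(w(\lambda+\rho) - \mu - \rho)$ stated in the theorem.

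The only delicate point is bookkeeping: one must fix a completion of $\mathbb{Z}[P]$ in which both the Weyl denominator inversions and the defining expansion of $\overline{\mathcal{P}}$ are simultaneously valid, and keep the sign conventions of the partition function consistent with those in the final formula. Once this is set up, the argument reduces to a routine comparison of formal characters, which is also the version carried out in [GW, p.\,357].
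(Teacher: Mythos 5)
Your proof is correct and follows the standard argument, which is exactly what the paper cites (\cite[p.~357]{GW}) rather than reproving: multiply the branching identity by the Weyl denominator of $\overline{\mathfrak{g}}$, invoke both Weyl character formulas, expand the residual denominator via $\overline{\mathcal{P}}$, and extract the $e^{\mu}$-coefficient using that $\mu+\overline{\rho}$ is the unique strictly $\overline{\mathfrak{g}}$-dominant element in its $\overline{W}$-orbit. To close the loop on the sign issue you flagged as the one delicate point: the paper defines $\overline{\mathcal{P}}$ through $\prod_{\alpha\in R_{+}\setminus\overline{R}_{+}}(1-e^{\alpha})^{-1}=\sum_{\beta}\overline{\mathcal{P}}(\beta)e^{\beta}$ with a \emph{positive} exponent, while the Weyl denominator uses $(1-e^{-\alpha})$, so $\prod_{\alpha\in R_{+}\setminus\overline{R}_{+}}(1-e^{-\alpha})^{-1}=\sum_{\beta}\overline{\mathcal{P}}(\beta)e^{-\beta}$; extracting the coefficient of $e^{\mu}$ from $\sum_{w}\varepsilon(w)e^{w(\lambda+\rho)-\rho}\sum_{\beta}\overline{\mathcal{P}}(\beta)e^{-\beta}$ forces $\beta=w(\lambda+\rho)-\mu-\rho$, which gives the stated formula with no residual sign. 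The completion required is the usual one where supports are bounded above in $\leq$, making the geometric expansions of $(1-e^{-\alpha})^{-1}$ for $\alpha\in R_{+}$ simultaneously valid.
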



\section{Dominant weights of $\overline{\mathfrak{g}}$ and Weyl chambers}

\label{section3}

This section is devoted to study the relationship between the various
subsets of roots and weights we have defined. To this end we introduce the
following subset which will play an important role in this paper: 
\begin{equation*}
U=\{u\in W\mid u(\overline{R}_{+})\subset R_{+}\}.   \label{defU}
\end{equation*}

\begin{Prop}
\label{Lem-U1} We have

\begin{enumerate}
\item 
\begin{equation*}
\overline{P}_{+}=\bigcup\limits_{u\in U}u^{-1}(P_{+}). 
\end{equation*}

\item 
\begin{equation*}
\overline{R}_{+}=\bigcap\limits_{u\in U}u^{-1}(R_{+}). 
\end{equation*}

\item Each element $w$ in $W$ admits a unique decomposition under the form $%
w=u\overline{w}$ with $u\in U$ and $\overline{w}\in\overline{W}$.
\end{enumerate}
\end{Prop}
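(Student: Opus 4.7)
The plan is to establish (3) first, as it is the classical parabolic coset decomposition via minimal length representatives, and then to deduce (2) and (1) from (3) together with a standard fundamental-domain argument for Weyl chambers. A preliminary observation that I will use throughout is that
\[
U=\{u\in W\mid u(\overline{S})\subset R_{+}\},
\]
because every $\beta\in\overline{R}_{+}$ is a non-negative integer combination of the elements of $\overline{S}$, so $u(\beta)$ is then a non-negative integer combination of positive roots of $R$, hence positive.

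For (3), I would pick in each coset $w\overline{W}$ an element $u$ of minimal length. If $u(\beta)\in -R_{+}$ for some $\beta\in\overline{S}$, the standard length criterion gives $\ell(us_{\beta})<\ell(u)$, contradicting minimality since $us_{\beta}\in w\overline{W}$; hence $u\in U$, and $w=u\overline{w}$ gives existence. For uniqueness, suppose $u_{1}\overline{w}_{1}=u_{2}\overline{w}_{2}$ with $u_{i}\in U$, and set $\overline{v}:=u_{2}^{-1}u_{1}\in\overline{W}$. If $\overline{v}\neq 1$, pick $\beta\in\overline{R}_{+}$ with $\overline{v}(\beta)\in -\overline{R}_{+}$; since $u_{2}\in U$ sends $\overline{R}_{+}$ into $R_{+}$ it also sends $-\overline{R}_{+}$ into $-R_{+}$, so $u_{1}(\beta)=u_{2}(\overline{v}(\beta))\in -R_{+}$, contradicting $u_{1}\in U$.

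For (2), the inclusion $\overline{R}_{+}\subset\bigcap_{u\in U}u^{-1}(R_{+})$ is tautological. Conversely, given $\alpha\in R\setminus\overline{R}_{+}$ I would exhibit $u\in U$ with $u(\alpha)\in -R_{+}$: if $\alpha\in -R_{+}$ take $u=1$; if $\alpha\in R_{+}\setminus\overline{R}_{+}$ I use that $\overline{W}$ preserves $R_{+}\setminus\overline{R}_{+}$ (each simple reflection $s_{\beta}$, $\beta\in\overline{S}$, fixes the coefficient of every simple root in $S\setminus\overline{S}$ appearing in $\alpha$), so $\overline{w}_{0}(\alpha)\in R_{+}\setminus\overline{R}_{+}$ for the longest element $\overline{w}_{0}$ of $\overline{W}$; then $u:=w_{0}\overline{w}_{0}$ (with $w_{0}$ the longest element of $W$) satisfies $u(\overline{R}_{+})=w_{0}(-\overline{R}_{+})\subset w_{0}(-R_{+})=R_{+}$, so $u\in U$, and $u(\alpha)\in -R_{+}$ as desired.

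For (1), set $\overline{C}':=\{x\in E\mid (x,\beta^{\vee})\geq 0\text{ for all }\beta\in\overline{R}_{+}\}$, so that $\overline{P}_{+}=P\cap\overline{C}'$; it suffices to show $\overline{C}'=\bigcup_{u\in U}u^{-1}(\overline{C})$. The inclusion $\supset$ follows from the $W$-invariance of $(\cdot,\cdot)$ and the definition of $U$. For $\subset$, given $x\in\overline{C}'$, pick $w\in W$ with $x\in w^{-1}(\overline{C})$ (closed Weyl chambers tile $E$) and decompose $w=u\overline{w}$ using (3); then $\overline{w}(x)=u^{-1}(w(x))\in u^{-1}(\overline{C})\subset\overline{C}'$, while $x\in\overline{C}'$, and since $\overline{C}'$ is a fundamental domain for $\overline{W}$ this forces $\overline{w}(x)=x$, whence $x\in u^{-1}(\overline{C})$. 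Intersecting with the $W$-stable lattice $P$ then yields (1). I expect the main obstacle to be the uniqueness in (3); its key ingredient, that elements of $U$ also send $\overline{R}_{-}$ into $-R_{+}$, is precisely what powers the fundamental-domain step of (1).
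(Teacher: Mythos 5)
Your proof is correct, and it takes a genuinely different route from the paper's, mainly by reversing the logical dependencies. The paper proves (1) directly: given $\gamma\in\overline{P}_{+}$, it starts from any $u'\in W$ sending $\gamma$ into $P_{+}$, introduces the reflection subgroup $W_{0}$ generated by the roots orthogonal to $u'(\gamma)$, and shows the minimal-length element of the coset $W_{0}u'$ lies in $U$; it then deduces (2) from (1) by a dual-cone argument, and proves (3) using (1) together with the fact that $\rho$ has trivial stabilizer. You instead establish (3) first as the classical minimal-length parabolic coset decomposition (existence via deletion of a descent at a simple root of $\overline{S}$, uniqueness via the fact that $u_{2}\in U$ sends $-\overline{R}_{+}$ into $-R_{+}$), then obtain (1) cheaply from (3) by combining the tiling of $E$ by $W$-chambers with the fundamental-domain property of $\overline{C}'$ for $\overline{W}$, and obtain (2) by exhibiting the single element $u=w_{0}\overline{w}_{0}\in U$ for which already $R_{+}\cap u^{-1}(R_{+})=\overline{R}_{+}$ (noting $\overline{W}$ preserves $R_{+}\setminus\overline{R}_{+}$). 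Your route is arguably cleaner and more textbook: it avoids the auxiliary system $R_{0}$ and the case analysis in the paper's proof of (1), and your explicit $u=w_{0}\overline{w}_{0}$ makes (2) transparent. The paper's approach, by contrast, is more hands-on in locating the relevant $u\in U$ for a prescribed $\gamma$, which may be closer in spirit to later computations. Both uniqueness arguments for (3) are essentially the same, yours phrased root-theoretically and the paper's via pairing against $\rho$.
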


\begin{proof}
We prove 1. Let $\lambda \in P_{+}$ and $u\in U$. For all $\alpha \in 
\overline{R}_{+}$, we have 
\begin{equation*}
(u^{-1}(\lambda ),\alpha ^{\vee })=(\lambda ,u(\alpha )^{\vee })\geq 0
\end{equation*}%
since $\lambda \in P_{+}$ and $u(\alpha )\in R_{+}$. It follows that $%
u^{-1}(\lambda )\in \overline{P}_{+}$ and $\bigcup\limits_{u\in
U}u^{-1}(P_{+})\subset \overline{P}_{+}$. \newline

Next let $\gamma \in \overline{P}_{+}$. There exists $u^{\prime }\in W$ such
that $u^{\prime }(\gamma )\in P_{+}$. Let $\alpha \in \overline{R}_{+}$.
Then $(\gamma ,\alpha )=(u^{\prime }(\gamma ),u^{\prime }(\alpha ))\geq 0$.
If the inequality is strict then we have $u^{\prime }(\alpha )\in R_{+}$. We
set 
\begin{align*}
R_{>0}& :=\{\beta \in R\mid (u^{\prime }(\gamma ),\beta )>0\}\subset R_{+},
\\
R_{0}& :=\{\beta \in R\mid (u^{\prime }(\gamma ),\beta )=0\} \\
R_{0,+}& :=\{\beta \in R_{+}\mid (u^{\prime }(\gamma ),\beta )=0\}\text{, }%
R_{0,-}=-R_{0,+}.
\end{align*}%
Note that $R_{0}$ is a subroot system of $R$ and that the simple system
associated to $R_{0,+}$ consists simply of $R_{0,+}\cap S$. Also, since $%
u(\gamma )\in P_{+}$, we have $R_{+}=R_{>0}\cup R_{0,+}$. Let $W_{0}=\langle
s_{\beta }\mid \beta \in R_{0}\rangle $. The group $W_{0}$ then acts on $R$
and stabilises both $R_{0}$ and $R_{>0}$. Since all the roots in $R_{0}$ are
orthogonal to $u^{\prime }(\gamma )$ we have $vu^{\prime }(\gamma
)=u^{\prime }(\gamma )\in P_{+}$ for all $v\in W_{0}$. Now let $u$ be the
element of minimal length in the coset $W_{0}u^{\prime }$. By the previous
argument, we do have $u^{\prime }(\gamma )\in P_{+}$. Let us show that $u\in
U$. Let $\alpha \in \overline{R}_{+}$. First if $u^{\prime }(\alpha )\in
R_{>0}$, then so does $u(\alpha )$ since $W_{0}$ stabilises $R_{>0}$ and we
are done in this case since $u(\alpha )\in R_{>0}\subset R_{+}$. Second, if $%
u^{\prime }(\alpha )\in R_{0}$, then so does $u(\alpha )$. Let $\delta \in
R_{0,+}\cap S$. Since $u$ is of minimal length, we have $\ell (s_{\delta
}u)>\ell (u)$ (here $\ell $ is the length function) and this implies that $%
u^{-1}(\delta )\in R_{+}$ (see for example \cite[\S 1.6]{Hum}). It follows
that $u^{-1}(\beta )$ is positive for all $\beta \in R_{0,+}$. Therefore we
cannot have $u(\alpha )=-\beta \in R_{0,-}$ with $\beta \in R_{0,+}$,
since this would imply that $u^{-1}(\beta )=-\alpha \in R_{-}$. We have
shown that $u(\alpha )\in R_{+}$ in both cases, that is $u\in U$ as
required. \newline

We prove 2. By definition of $U$ we have $\overline{R}_{+}\subset
\bigcap\limits_{u\in U}u^{-1}(R_{+})$. Assume $\alpha\in\bigcap\limits_{u\in
U}u^{-1}(R_{+})$. We then have $u(\alpha)\in R_{+}$ for any $u\in U$.\
Consider $\gamma\in\overline{P}_{+}.$ By assertion 1, there exists $u\in U$
such that $\gamma\in u^{-1}(P_{+})$. We thus have $(\gamma,\alpha^{\vee
})=(u(\gamma),u(\alpha)^{\vee})\geq0$ for any $\gamma\in\overline{P}_{+} $.\
This implies that $\alpha$ is a positive root of $\overline{R}_{+}$.\newline

We prove 3. Recall that the stabilizer of $\rho$ under $W$ reduces to $\{1\} 
$. Consider $w\in W$. There exists $\overline{w}\in\overline{W}$ such that $%
\overline{w}(w^{-1}\cdot\rho)\in\overline{P}_{+}$. By assertion 1, there
exists $u\in U$ such that $u\overline{w}(w^{-1}\cdot\rho)\in P_{+}$. Since $%
\rho$ is the unique element of the orbit $W\cdot\rho$ in $P_{+},$ we must
have $w=u\overline{w}$. Now assume that there exist $u_{1},u_{2}\in U $ and $%
\overline{w}_{1},\overline{w}_{2}\in\overline{W}$ such that $u_{1}\overline{w%
}_{1}=u_{2}\overline{w}_{2}.$ We have $u_{2}=u_{1}\overline{w}$ with $%
\overline{w}=\overline{w}_{1}\overline{w}_{2}^{-1}\in\overline{W}$. If $%
\overline{w}\neq1$, there exists $\alpha\in\overline{R}_{+}$ such that $%
\overline{w}(\alpha)=-\beta$ with $\beta\in\overline{R}_{+}$. Then $%
(\rho,u_{2}(\alpha)^{\vee})=-(\rho,u_{1}(\beta)^{\vee})<0$ since $%
u_{1}(\beta)\in R_{+}$. This contradicts the hypothesis $u_{2}(\alpha)\in
R_{+}$. hence $\overline{w}=1,$ that is $\overline{w}_{1}=\overline{w}_{2}$
and $u_{1}=u_{2}$.
\end{proof}

Denote by $\overline{E}$ the $\mathbb{Q}$-vector space generated by the
roots in $\overline{R}_{+}$. Then we have $\overline{E}\cap R_{+}=\overline{R%
}_{+}$; see \cite[\S 1.10]{Hum}. We will make frequent use of this fact in
the rest of the paper. It is important to notice that this holds because we
assumed that $\overline{S}\subset S$.

\begin{Lem}
\label{LemU2} Let $u\in U$. Then $u(\overline{\rho})=\overline{\rho}$ if and
only if $u(\overline{R}_{+})=\overline{R}_{+}$.
\end{Lem}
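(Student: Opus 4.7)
The plan is to prove each direction separately, with the reverse implication being immediate and the forward implication exploiting the crucial fact $\overline{E} \cap R_{+} = \overline{R}_{+}$ recalled just before the lemma.

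For the easy direction, if $u(\overline{R}_{+}) = \overline{R}_{+}$, then $u$ merely permutes $\overline{R}_{+}$, so summing yields
\begin{equation*}
u(\overline{\rho}) = \tfrac{1}{2}\sum_{\alpha \in \overline{R}_{+}} u(\alpha) = \tfrac{1}{2}\sum_{\beta \in \overline{R}_{+}} \beta = \overline{\rho}.
\end{equation*}

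For the converse, I would assume $u(\overline{\rho}) = \overline{\rho}$. Doubling gives
\begin{equation*}
\sum_{\alpha \in \overline{R}_{+}} u(\alpha) = 2u(\overline{\rho}) = 2\overline{\rho} = \sum_{\beta \in \overline{R}_{+}} \beta,
\end{equation*}
and the right-hand side lies in $\overline{E}$ by definition. Since $u \in U$, each $u(\alpha)$ belongs to $R_{+}$ and is therefore a non-negative integer combination of all the simple roots in $S$. The key observation is then that if the sum of such non-negative combinations has zero coefficient on every simple root of $S \setminus \overline{S}$, then each individual summand must also have zero coefficient on those simple roots. Consequently each $u(\alpha)$ lies in $\overline{E}$, hence in $\overline{E} \cap R_{+} = \overline{R}_{+}$.

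This shows $u(\overline{R}_{+}) \subset \overline{R}_{+}$, and since $u$ is a bijection on $R$ and both sets have the same (finite) cardinality, the inclusion is an equality. I do not expect any serious obstacle here; the only subtle point is invoking the identity $\overline{E} \cap R_{+} = \overline{R}_{+}$ together with the positivity of the coefficients, which together force each $u(\alpha)$ individually into $\overline{R}_{+}$ even though a priori one only controls the sum.
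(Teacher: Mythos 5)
Your proof is correct and rests on exactly the same two facts the paper uses: that $u\in U$ forces each $u(\alpha)$ to be a positive root whose simple-root coordinates are all non-negative (so no cancellation can occur in the sum $\sum_{\alpha\in\overline{R}_+}u(\alpha)=2\overline{\rho}$), and that $\overline{E}\cap R_+=\overline{R}_+$. The paper phrases this as a contrapositive (if some $u(\alpha)\notin\overline{R}_+$, a simple root $\alpha_j\in S\setminus\overline{S}$ survives in $u(\overline{\rho})$, so $u(\overline{\rho})\notin\overline{E}$), while you argue the implication directly and then upgrade the inclusion $u(\overline{R}_+)\subset\overline{R}_+$ to an equality by a cardinality count; this is only a cosmetic reorganization of the same argument.
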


\begin{proof}
Assume that there exists $\alpha\in\overline{R}_{+}$ such that $u(\alpha
)\notin\overline{R}_{+}$. Then since $u(\alpha)\in R_{+}$ we have $%
u(\alpha)\notin\overline{E}$. It follows that there exists a simple root $%
\alpha_{j}\notin\overline{R}_{+}$ such that $u(\alpha)\geq\alpha_{j}$. As $u(%
\overline{R}_{+})\subset R_{+}$, there can't be any cancellation of simple
roots when decomposing $u(\overline{\rho})$ on the basis $S$. Therefore we
have $u(\overline{\rho})\geq\alpha_{j}$ and $u(\overline{\rho})\notin 
\overline{E}$. From there, we see that we cannot have $u(\overline
\rho)=\overline\rho$ since $\overline\rho\in\overline{E}$. The converse is
trivial.
\end{proof}

\begin{Lem}
\label{LemU5} Let $u\in U$ be such that $u(\overline{\rho})\neq\overline{%
\rho }$. Then $u(\overline{\rho})\nless\overline{\rho}$.
\end{Lem}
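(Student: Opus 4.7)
The plan is to reduce the statement to the previous lemma and then analyze the coefficient of a carefully chosen simple root $\alpha_j \notin \overline{S}$ in the expansion of $u(\overline{\rho})$ and $\overline{\rho}$ on the basis $S$.

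First, by Lemma \ref{LemU2}, the hypothesis $u(\overline{\rho}) \neq \overline{\rho}$ is equivalent to $u(\overline{R}_{+}) \neq \overline{R}_{+}$. So there exists $\alpha \in \overline{R}_{+}$ with $u(\alpha) \notin \overline{R}_{+}$. Since $u \in U$, we still have $u(\alpha) \in R_{+}$, hence $u(\alpha) \in R_{+} \setminus \overline{R}_{+}$. By the fact recalled just before Lemma \ref{LemU2} (namely $\overline{E}\cap R_{+}=\overline{R}_{+}$), this implies $u(\alpha) \notin \overline{E}$. Therefore, when $u(\alpha)$ is decomposed in the basis of simple roots $S$, it has a strictly positive coefficient on some simple root $\alpha_j \notin \overline{S}$.

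Next, I would look at $u(\overline{\rho}) = \frac{1}{2}\sum_{\beta \in \overline{R}_{+}} u(\beta)$. For each $\beta \in \overline{R}_{+}$ we have $u(\beta) \in R_{+}$, so when decomposed on the basis $S$, every $u(\beta)$ has non-negative coefficients on each simple root. In particular, the coefficient of $\alpha_j$ in $u(\overline{\rho})$ is a sum of non-negative terms, and at least one of them (the contribution from $u(\alpha)$) is strictly positive. Hence the coefficient of $\alpha_j$ in $u(\overline{\rho})$ is strictly positive. On the other hand, since $\overline{\rho} \in \overline{E}$ and $\alpha_j \notin \overline{S}$, the coefficient of $\alpha_j$ in $\overline{\rho}$ is zero.

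Therefore the coefficient of $\alpha_j$ in $\overline{\rho} - u(\overline{\rho})$ is strictly negative. But any element of the form $\sum_{\gamma \in R_{+}} c_\gamma \gamma$ with $c_\gamma \geq 0$ necessarily has non-negative coefficient on every simple root. This forces $\overline{\rho} - u(\overline{\rho})$ not to lie in the non-negative span of $R_{+}$, which precisely means $u(\overline{\rho}) \nless \overline{\rho}$, as desired. There is no real obstacle here; the only thing to be careful about is the absence of cancellation in the sum $\sum_{\beta \in \overline{R}_{+}} u(\beta)$, which is immediate from $u \in U$.
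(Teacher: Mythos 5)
Your proof is correct and follows essentially the same route as the paper: both reduce to Lemma~\ref{LemU2} to produce a simple root $\alpha_j \notin \overline{S}$ appearing with strictly positive coefficient in $u(\overline{\rho})$ (with no cancellation because $u \in U$), while $\overline{\rho} \in \overline{E}$ has zero $\alpha_j$-coefficient, contradicting $u(\overline{\rho}) < \overline{\rho}$. You merely spell out the no-cancellation step and the final coefficient comparison a bit more explicitly than the paper does.
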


\begin{proof}
Since $u(\overline{R}_{+})\neq \overline{R}_{+}$, arguing as in the proof of
the previous lemma, we know that there exists $\alpha \in \overline{R}_{+}$
and a simple root $\alpha _{j}\notin \overline{E}$ such that $u(\alpha )\geq
\alpha _{j}$. Since $\overline{\rho }\in \overline{E}$, the root $\alpha _{j}
$ appears in the decomposition of $u(\overline{\rho })-\overline{\rho }$ in
the basis $S$ with a positive coefficient hence we cannot have $u(\overline{%
\rho })<\overline{\rho }$.
\end{proof}

\begin{Lem}
\label{LemU3} Let $\gamma,\gamma^{\prime}\in P$ be such that $\gamma \leq_{%
\overline{R}_{+}}\gamma^{\prime}$. Then we have $u(\gamma)\leq_{R_{+}}u(%
\gamma^{\prime})$ for all $u\in U$.
\end{Lem}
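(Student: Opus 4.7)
The plan is to unpack the two definitions and use the defining property of $U$. By hypothesis, $\gamma'-\gamma$ can be written as
\[
\gamma'-\gamma=\sum_{\alpha\in\overline{R}_{+}}n_{\alpha}\,\alpha
\]
with nonnegative integer coefficients $n_{\alpha}$. Since $u$ acts linearly on the weight lattice $P$, applying $u$ yields
\[
u(\gamma')-u(\gamma)=\sum_{\alpha\in\overline{R}_{+}}n_{\alpha}\,u(\alpha).
\]

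Now I would invoke the defining property of $U$: for every $\alpha\in\overline{R}_{+}$, we have $u(\alpha)\in R_{+}$. Consequently the right-hand side is a nonnegative integer combination of elements of $R_{+}$, which is precisely the statement that $u(\gamma)\leq_{R_{+}}u(\gamma')$.

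There is no real obstacle here: the lemma is essentially a direct consequence of the linearity of $u$ together with the definition of $U$, and it does not require any of the more refined information (orthogonality, length-minimality, decomposition $w=u\overline{w}$) developed in the preceding propositions. The only point worth double-checking is a notational one, namely that $\leq_{R_{+}}$ in the statement coincides with the order $\leq$ ($=\leq_{1}$) defined at the beginning of Section~2, so that the conclusion matches the conventions used elsewhere in the paper.
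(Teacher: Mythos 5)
Your proof is correct and follows essentially the same approach as the paper's: expand $\gamma'-\gamma$ as a nonnegative combination of roots in $\overline{R}_{+}$, apply $u$ linearly, and use the defining property $u(\overline{R}_{+})\subset R_{+}$. (The paper's own wording even contains a minor sign slip, writing $\gamma\geq_{\overline{R}_{+}}\gamma'$ where it means $\leq$; your version avoids that.)
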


\begin{proof}
By definition $\gamma\geq_{\overline{R}_{+}}\gamma^{\prime}$ implies that $%
\gamma-\gamma^{\prime}$ is a sum of roots in $\overline{R}_{+}$. Since $u(%
\overline{R}_{+})\subset R_{+}$ we see that $u(\gamma-\gamma^{\prime})$ is a
sum of roots in $R_{+}$. Hence $u(\gamma-\gamma^{\prime})=u(\gamma
)-u(\gamma^{\prime})\geq_{R_{+}}0$ as required.
\end{proof}


\begin{Lem}
\label{LemU4} Let $\gamma\in P$ be such that $\gamma\notin\overline{P}_{+}$.
Then we have $u(\gamma)\notin P_{+}$ for all $u\in U$.
\end{Lem}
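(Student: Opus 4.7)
The plan is to argue by contrapositive: assume there exists some $u \in U$ with $u(\gamma) \in P_+$, and deduce that $\gamma \in \overline{P}_+$.

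Phrased this way, the statement is essentially immediate from assertion 1 of Proposition \ref{Lem-U1}, which gives the set-theoretic identity $\overline{P}_+ = \bigcup_{u \in U} u^{-1}(P_+)$. Indeed, from $u(\gamma) \in P_+$ and $u \in U$ one gets $\gamma = u^{-1}(u(\gamma)) \in u^{-1}(P_+) \subseteq \overline{P}_+$, contradicting the hypothesis $\gamma \notin \overline{P}_+$.

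For a self-contained version I would just re-run the short computation from the first paragraph of the proof of Proposition \ref{Lem-U1}(1). Namely, for any $\alpha \in \overline{S}$, the definition of $U$ ensures $u(\alpha) \in R_+$; since $u \in W$ is an isometry and intertwines $u(\alpha^\vee) = u(\alpha)^\vee$, one has
$$(\gamma,\alpha^\vee) = (u(\gamma), u(\alpha)^\vee) \geq 0,$$
because $u(\gamma) \in P_+$ pairs nonnegatively with every positive coroot. Since this holds for all simple roots $\alpha \in \overline{S}$ of $\overline{\mathfrak{g}}$, we obtain $\gamma \in \overline{P}_+$, as required.

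There is no real obstacle here: the lemma is a one-line contrapositive consequence of a statement already proved, and the direct verification reuses only the defining property of $U$ together with the $W$-invariance of the inner product. The only minor point to flag is that it suffices to test the dominance condition on the simple roots of $\overline{\mathfrak{g}}$, i.e.\ on $\overline{S}$, rather than on the whole of $\overline{R}_+$.
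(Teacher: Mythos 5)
Your proposal is correct and takes essentially the same approach as the paper. The paper argues directly (pick $\alpha \in \overline{R}_+$ with $(\gamma,\alpha^\vee) < 0$, use $W$-invariance of the pairing and $u(\alpha) \in R_+$ to conclude $u(\gamma)\notin P_+$), while you argue by contrapositive and test on $\overline{S}$; the underlying computation is identical, and your additional observation that the lemma is already a formal consequence of Proposition~\ref{Lem-U1}(1) is a valid, if slightly circular-feeling, shortcut since that proposition is itself proved by the same mechanism.
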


\begin{proof}
Since $\gamma\notin\overline{P}_{+}$, there exists $\alpha\in\overline{R}%
_{+} $ such that $(\gamma,\alpha^{\vee})<0$. It follows that 
\begin{equation*}
(u(\gamma),u(\alpha)^{\vee})=(\gamma,\alpha^{\vee})<0. 
\end{equation*}
Since $u(\alpha)\in R_{+}$, this implies that $u(\gamma)\notin P_{+}$.
\end{proof}


\section{Induced characters}

\subsection{The functions $H_{\protect\mu}$}

Given $\mu \in \overline{P}_{+}$, write $H_{\mu }:=\mathrm{char}(V(\mu
)\uparrow _{\overline{\mathfrak{g}}}^{\mathfrak{g}})$ the induced character
of $\overline{V}(\mu )$ from $\overline{\mathfrak{g}}$ to $\mathfrak{g}$. We
then have 
\begin{equation*}
H_{\mu }:=\sum_{\lambda \in P_{+}}m_{\mu }^{\lambda }s_{\lambda }.
\end{equation*}%
Observe there can exist infinitely many weights $\lambda $ such that $m_{\mu
}^{\lambda }\neq 0$. When $\overline{\mathfrak{g}}=\mathfrak{h}$ is reduced
to the Cartan subalgebra, we have $\overline{R}_{+}=\emptyset $ and we set $%
m_{\lambda }^{\mu }=K_{\lambda ,\mu }=\dim V(\lambda )_{\mu }$ so that 
\begin{equation}
h_{\mu }:=\sum_{\lambda \in P_{+}}K_{\lambda ,\mu }s_{\lambda }.
\label{dech}
\end{equation}%
Since $K_{\lambda ,\mu }=K_{\lambda ,w(\mu )}$ for any $w\in W$, we have $%
h_{\mu }=h_{w(\mu )}$ (for the usual action of $W$ on $P$). Moreover, $%
K_{\mu ,\mu }=1$ and $K_{\lambda ,\mu }\neq 0$ if and only if $\lambda \geq
\mu $ (i.e. $\lambda -\mu $ decomposes as a sum of simple roots). The sets $%
\{s_{\lambda }\mid \lambda \in P_{+}\}$ and $\{h_{\lambda }\mid \lambda \in
P_{+}\}$ are bases of $\mathbb{G}$ and the corresponding transition matrix
is unitriangular for the order $\leq $.

We now define two $\mathbb{Z}$-linear maps $H$ and $S$ by 
\begin{equation*}
H:\left\{ 
\begin{array}{c}
\mathbb{Z}[P]\rightarrow\mathbb{G} \\ 
e^{\beta}\mapsto h_{\beta}%
\end{array}
\right. \text{ and }S:\left\{ 
\begin{array}{c}
\mathbb{Z}[P]\rightarrow\mathbb{G} \\ 
e^{\beta}\mapsto s_{\beta}%
\end{array}
\right. 
\end{equation*}
and we set%
\begin{equation*}
\Delta=\prod_{\alpha\in R_{+}}(1-e^{\alpha}). 
\end{equation*}

\begin{Prop}
\label{prop_SH}The maps $H$ and $S$ satisfy the relations 
\begin{equation*}
S(e^{\beta})=H(\Delta e^{\beta})\text{ and }H(e^{\beta})=S(\Delta^{-1}e^{%
\beta}) 
\end{equation*}
for any $\beta\in P$. Therefore $S=H\circ\Delta$ and $H=S\circ\Delta^{-1}$
(by writing for short $\Delta$ and $\Delta^{-1}$ for the multiplication by $%
\Delta$ and $\Delta^{-1}$ in $\mathbb{Z}[[P]]$).
\end{Prop}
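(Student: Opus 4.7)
The plan is to prove the first identity $S(e^{\beta})=H(\Delta e^{\beta})$ by a direct computation, then deduce the second by formally inverting multiplication by $\Delta$.

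First I would expand $\Delta e^{\beta}$ as the finite sum $\sum_{A\subseteq R_{+}}(-1)^{|A|}e^{\beta+\sigma_{A}}$ with $\sigma_{A}=\sum_{\alpha\in A}\alpha$, apply the $\mathbb{Z}$-linear map $H$ termwise, and substitute the defining expansion (\ref{dech}) of $h_{\mu}$. This gives
\begin{equation*}
H(\Delta e^{\beta})=\sum_{\lambda\in P_{+}}C_{\lambda}\,s_{\lambda},\qquad C_{\lambda}=\sum_{A\subseteq R_{+}}(-1)^{|A|}K_{\lambda,\beta+\sigma_{A}}.
\end{equation*}
Reading $K_{\lambda,\mu}=[e^{\mu}]s_{\lambda}$ and factoring the alternating sum, I would rewrite $C_{\lambda}=[e^{\beta}]\bigl(s_{\lambda}\prod_{\alpha\in R_{+}}(1-e^{-\alpha})\bigr)$.

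Next, I would invoke the Weyl character formula $s_{\lambda}\prod_{\alpha\in R_{+}}(1-e^{-\alpha})=\sum_{w\in W}\varepsilon(w)e^{w(\lambda+\rho)-\rho}$ to obtain $C_{\lambda}=\sum_{w\in W,\,w\circ\lambda=\beta}\varepsilon(w)$. Since $\lambda+\rho$ is regular for every $\lambda\in P_{+}$ and $W$ acts freely on regular weights, this sum has at most one term. I would then split on whether $\beta+\rho$ is regular: if $\beta+\rho$ is singular, every $C_{\lambda}$ vanishes and $H(\Delta e^{\beta})=0=s_{\beta}$, the latter following from the vanishing of the Weyl numerator. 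Otherwise there is a unique pair $(\lambda,w)\in P_{+}\times W$ with $w\circ\lambda=\beta$, and the Weyl character formula applied to both $s_{\lambda}$ and $s_{\beta}$ (via the substitution $u\mapsto uw$ inside the numerator) gives $s_{\beta}=\varepsilon(w)s_{\lambda}$; hence $H(\Delta e^{\beta})=\varepsilon(w)s_{\lambda}=s_{\beta}$. This proves $S=H\circ\Delta$ on $\mathbb{Z}[P]$.

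For the second identity I would observe that each factor $1-e^{\alpha}$ has a formal inverse $\sum_{k\geq 0}e^{k\alpha}$, so that $\Delta^{-1}e^{\beta}$ is well-defined in the completion $\mathbb{Z}[[P]]$; extending $H$ and $S$ continuously to this completion and composing $S=H\circ\Delta$ on the right with $\Delta^{-1}$ would give $H=S\circ\Delta^{-1}$. The only delicate point I anticipate is precisely this extension: one must fix a topology on $\mathbb{Z}[[P]]$ in which $\Delta^{-1}e^{\beta}$ converges and both $H$ and $S$ remain continuous, so that the formal inversion is rigorous rather than merely symbolic. Once this bookkeeping is settled, no further obstacle arises; the algebraic core of the proof is the single application of the Weyl character formula used above.
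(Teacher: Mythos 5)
Your proof of the first identity $S(e^{\beta})=H(\Delta e^{\beta})$ is correct: expanding $\Delta e^{\beta}$ as a finite alternating sum over subsets of $R_{+}$, identifying $K_{\lambda,\mu}$ as the coefficient of $e^{\mu}$ in $s_{\lambda}$, and then applying the Weyl character formula together with the free action of $W$ on regular weights is a clean computation, including the singular case where $s_{\beta}=0$. However, this is the reverse of the paper's order. The paper proves $H(e^{\beta})=S(\Delta^{-1}e^{\beta})$ directly, by expanding $\Delta^{-1}$ via the Kostant-type partition function $\mathcal{P}$ and invoking the standard formula $K_{\lambda,\beta}=\sum_{w}\varepsilon(w)\mathcal{P}(w\circ\lambda-\beta)$; having established $H(U)=S(\Delta^{-1}U)$ for all $U\in\mathbb{Z}[P]$ by linearity, it then obtains $S(e^{\beta})=H(\Delta e^{\beta})$ instantly by specialising $U=\Delta e^{\beta}$, which is a \emph{finite} polynomial, so no completion or continuity argument is ever needed. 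Your direction has the asymmetry running the wrong way: to pass from $S=H\circ\Delta$ to $H=S\circ\Delta^{-1}$ you must apply the map on the infinite-support side, and as you correctly flag, this requires extending $H$ or $S$ to a completion of $\mathbb{Z}[P]$ and checking continuity --- a step your proposal leaves open. This is a real but minor gap: it can be closed by observing that $\Delta^{-1}e^{\beta}$ is supported on $\beta$ plus the cone of nonnegative sums of positive roots, which meets each $W$-orbit of $\lambda+\rho$ ($\lambda\in P_{+}$) in finitely many points, so that $S(\Delta^{-1}e^{\beta})=\sum_{\gamma}\mathcal{P}(\gamma)s_{\beta+\gamma}$ has a well-defined coefficient on each $s_{\lambda}$; but at that point one has essentially reproduced the paper's direct computation of $H(e^{\beta})$. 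The algebraic core (Weyl character formula, $\rho$-shifted regularity, the dot action) is identical in both proofs; the paper's choice of which factor to expand first is simply the more economical one.
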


\begin{proof}
The partition function $\mathcal{P}$ is defined by 
\begin{equation*}
\Delta^{-1}=\prod_{\alpha\in R_{+}}\frac{1}{1-e^{\alpha}}=\sum_{\gamma\in P}%
\mathcal{P}(\gamma)e^{\gamma}
\end{equation*}
and we have by definition $h_{\beta}=\sum_{\lambda}K_{\lambda,\beta}s_{%
\lambda}$ where $K_{\lambda,\beta}=\sum_{w}\varepsilon(w)\mathcal{P}%
(w\circ\lambda-\beta)$.\ This gives 
\begin{equation*}
S(\Delta^{-1}e^{\beta})=\sum_{\gamma\in P}\mathcal{P}(\gamma)s_{\beta+\gamma
}. 
\end{equation*}
Let $\gamma\in P$. Then either $s_{\beta+\gamma}=0$ or there exists $%
\lambda\in P_{+}$ and $w\in W$ such that $w^{-1}\circ(\beta+\gamma)=\lambda$%
, that is $\gamma=w\circ\lambda-\beta$. This yields $s_{\beta+\gamma
}=\varepsilon(w)s_{\lambda}$ and in turn we obtain 
\begin{equation*}
S(\Delta^{-1}e^{\beta})=\sum_{\lambda}\sum_{w\in W}\varepsilon(w)\mathcal{P}%
(w\circ\lambda-\beta)s_{\lambda}=\sum_{\lambda}K_{\lambda,\beta}s_{\lambda
}=h_{\beta}
\end{equation*}
as desired.\ Note that we have for any $U\in\mathbb{Z}[P]$, $H(U):=S(\Delta
^{-1}U) $. Then if we set $U=\Delta e^{\beta}$, we get the relation $%
H(\Delta e^{\beta})=S(e^{\beta})$, as required.
\end{proof}

\bigskip

Now write $\overline{\mathbb{G}}=\mathbb{Z}^{\overline{W}}[e^{\beta }\mid
\beta \in P]$ the character ring of $\overline{\mathfrak{g}}$ (polynomials
of $\mathbb{Z}[P]$ invariant under the action of $\overline{W}$ the Weyl
group of $\overline{\mathfrak{g}}$). The set of irreducible characters $\{%
\overline{s}_{\mu }\mid \mu \in \overline{P}_{+}\}$ of $\overline{\mathfrak{g%
}}$ is a basis of $\overline{\mathbb{G}}$. Define the $\mathbb{Z}$-linear
map 
\begin{equation*}
\overline{H}:\left\{ 
\begin{array}{c}
\mathbb{Z}[P]\rightarrow \mathbb{G} \\ 
e^{\mu }\mapsto H_{\mu }%
\end{array}%
\right. 
\end{equation*}%
and set 
\begin{equation*}
\overline{\Delta }=\prod_{\alpha \in R_{+}\setminus \overline{R}%
_{+}}(1-e^{\alpha })\text{ and }\overline{\bigtriangledown }=\prod_{\alpha
\in \overline{R}_{+}}(1-e^{\alpha }).
\end{equation*}

\begin{Prop}
\ \ 

\begin{enumerate}
\item The maps $\overline{H}$ and $S$ satisfy the relation 
\begin{equation*}
\overline{H}(e^{\mu})=S(\overline{\Delta}^{-1}e^{\mu}) 
\end{equation*}
for any $\mu\in P$. We write for short $\overline{H}=S\circ\overline{\Delta }%
^{-1}$.

\item We have $\overline{H}(e^{\mu})=H(\overline{\bigtriangledown}e^{\mu})$.
\end{enumerate}
\end{Prop}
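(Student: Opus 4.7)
The plan is to prove part 1 by mimicking the proof of Proposition \ref{prop_SH}, using Theorem \ref{Th_multi} (Kostant-type formula for branching multiplicities) in place of the formula for $K_{\lambda,\beta}$. Part 2 will then follow formally from part 1 combined with Proposition \ref{prop_SH} and the factorisation $\Delta = \overline{\Delta}\,\overline{\bigtriangledown}$.

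For part 1, I would start from the definition of the partition function $\overline{\mathcal{P}}$, which gives $\overline{\Delta}^{-1}e^{\mu} = \sum_{\gamma\in P} \overline{\mathcal{P}}(\gamma)\,e^{\mu+\gamma}$. Applying $S$ termwise yields
\begin{equation*}
S(\overline{\Delta}^{-1}e^{\mu}) = \sum_{\gamma\in P} \overline{\mathcal{P}}(\gamma)\, s_{\mu+\gamma}.
\end{equation*}
For each $\gamma$, either $s_{\mu+\gamma}=0$ (when $\mu+\gamma+\rho$ lies on a wall of the Weyl chambers) or there exists a unique $w\in W$ and $\lambda\in P_+$ with $w^{-1}\circ(\mu+\gamma) = \lambda$, so that $\gamma = w(\lambda+\rho)-\rho-\mu$ and $s_{\mu+\gamma} = \varepsilon(w)\,s_{\lambda}$. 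Reindexing the sum by $(\lambda,w)$ instead of $\gamma$ gives
\begin{equation*}
S(\overline{\Delta}^{-1}e^{\mu}) = \sum_{\lambda\in P_+}\Bigl(\sum_{w\in W}\varepsilon(w)\,\overline{\mathcal{P}}(w(\lambda+\rho)-\mu-\rho)\Bigr)\,s_{\lambda},
\end{equation*}
and by Theorem \ref{Th_multi} the parenthesised coefficient equals $m_{\mu}^{\lambda}$, so the right-hand side is $H_{\mu} = \overline{H}(e^{\mu})$ when $\mu\in\overline{P}_+$. The same computation, read as a definition, extends $\overline{H}$ to all of $\mathbb{Z}[P]$, which is consistent with how $\overline{H}$ was introduced above.

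For part 2, I would apply Proposition \ref{prop_SH}, which states that $S = H\circ \Delta$ as operators on $\mathbb{Z}[[P]]$. Using this with $U = \overline{\Delta}^{-1}e^{\mu}$ together with the factorisation $\Delta = \overline{\Delta}\cdot\overline{\bigtriangledown}$ (immediate from the definitions of $\Delta$, $\overline{\Delta}$ and $\overline{\bigtriangledown}$ since $R_+ = \overline{R}_+\sqcup (R_+\setminus\overline{R}_+)$), we obtain
\begin{equation*}
\overline{H}(e^{\mu}) \;=\; S(\overline{\Delta}^{-1}e^{\mu}) \;=\; H(\Delta\,\overline{\Delta}^{-1}e^{\mu}) \;=\; H(\overline{\bigtriangledown}\, e^{\mu}),
\end{equation*}
which is part 2.

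There is no real obstacle in this proof: both parts are essentially formal manipulations with characters and partition functions. The only point to treat with care is that $\overline{\Delta}^{-1}$ is a formal power series in $\mathbb{Z}[[P]]$ rather than a polynomial, so one must justify that applying $S$ termwise and then reindexing the resulting sum by $\lambda\in P_+$ is legitimate — but this is handled exactly as in Proposition \ref{prop_SH}, since for a fixed $\lambda$ only finitely many $w\in W$ can contribute a nonzero $\overline{\mathcal{P}}(w(\lambda+\rho)-\mu-\rho)$.
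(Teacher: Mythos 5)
Your proposal is correct and follows essentially the same route as the paper: part 1 is proved by repeating the argument of Proposition \ref{prop_SH} with $\overline{\mathcal{P}}$ in place of $\mathcal{P}$ and invoking Theorem \ref{Th_multi}, and part 2 is deduced formally from part 1 together with Proposition \ref{prop_SH} and the factorisation $\Delta = \overline{\Delta}\,\overline{\bigtriangledown}$. This is exactly how the paper argues, just stated more tersely there.
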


\begin{proof}
The first assertion is proved as in the previous proof by replacing the
partition function $\mathcal{P}$ by $\overline{\mathcal{P}}$. For the second
one, we combine the first part with the previous proposition.
\end{proof}

\bigskip

We have, using the Weyl character formula: 
\begin{equation*}
\overline{\bigtriangledown}=\prod_{\alpha\in\overline{R}_{+}}(1-e^{\alpha
})=\sum_{\overline{w}\in\overline{W}}\varepsilon(\overline{w})e^{\overline {%
\rho}-\overline{w}(\overline{\rho})}
\end{equation*}
where $\overline{\rho}$ is the half sum of positive roots of $\overline {%
\mathfrak{g}}$. By the second assertion of the previous proposition, we get
for all $\mu\in P$%
\begin{equation*}
H_{\mu}=\overline{H}(e^{\mu})=\sum_{\overline{w}\in\overline{W}}\varepsilon(%
\overline{w})h_{\mu+\overline{\rho}-\overline{w}(\overline{\rho})}. 
\end{equation*}


\subsection{Irreducible components of $\overline{R}$}

\label{subsec_Dec}Now assume the semisimple Lie algebra $\overline{\mathfrak{%
g}}$ has a decomposition of the form 
\begin{equation*}
\overline{\mathfrak{g}}\mathfrak{=g}_{1}\oplus \mathfrak{g}_{2}\oplus \cdots
\oplus \mathfrak{g}_{r}
\end{equation*}%
where each $\mathfrak{g}_{k},$ $k=1,\ldots ,r$ is a Lie subalgebra of $%
\mathfrak{g}$ with irreducible root system $R_{k}\subset \overline{R}$ and $%
\overline{R}=\bigsqcup\limits_{k=1}^{r}R^{(k)}$.\ We also assume that we
have $P=P^{(1)}\oplus \cdots \oplus P^{(r)}$ where $P^{(k)}$ is the weight
lattice of $\mathfrak{g}_{k}$.\ In particular each weight $\mu \in P_{+}$
decomposes on the form $\mu =\mu ^{(1)}+\cdots +\mu ^{(r)}$ with $\mu
^{(k)}\in P_{+}^{(k)}$. We then have additional properties for the
functions $H_{\mu }$ we shall need in Section \ref{Sec_class}. \\

We have 
\begin{equation*}
\overline{\bigtriangledown }=\prod_{\alpha \in \overline{R}_{+}}(1-e^{\alpha
})=\prod_{k=1}^{r}\prod_{\alpha \in R_{+}^{(k)}}(1-e^{\alpha })
\end{equation*}%
and%
\begin{equation*}
H_{\mu }=\prod_{k=1}^{r}\prod_{\alpha \in R_{+}^{(k)}}(1-e^{\alpha })h_{\mu
^{(1)}+\cdots +\mu ^{(r)}}.
\end{equation*}%
Combining (\ref{dech}) and Proposition \ref{prop_SH} (for each root system $%
R_{k})$, we get for any $k=1,\ldots ,r,$%
\begin{equation*}
\prod_{\alpha \in R_{+}^{(k)}}(1-e^{\alpha })h_{\mu ^{(1)}+\cdots +\mu
^{(r)}}=\sum_{\lambda ^{(k)}\in P_{+}^{(k)}}K_{\lambda ^{(k)},\mu
^{(k)}}^{-1}h_{\mu ^{(1)}+\cdots \lambda ^{(k)}+\cdots +\mu ^{(r)}}
\end{equation*}%
where the coefficients $K_{\lambda ^{(k)},\mu ^{(k)}}^{-1}$ are those of the
inverse matrix of $(K_{\lambda ^{(k)},\mu ^{(k)}})_{\lambda ^{(k)},\mu
^{(k)}\in P_{+}^{(k)}}$. By an easy induction, we then get 
\begin{equation}
H_{\mu }=\sum_{\lambda ^{(1)}\in P_{+}^{(k)}}\cdots \sum_{\lambda ^{(r)}\in
P_{+}^{(r)}}K_{\lambda ^{(1)},\mu ^{(1)}}^{-1}\cdots K_{\lambda ^{(r)},\mu
^{(r)}}^{-1}h_{\lambda ^{(1)}+\cdots +\lambda ^{(r)}}.  \label{H-product}
\end{equation}

\subsection{The conjecture}

We start with an easy observation.

\begin{Lem}
Consider $u\in W$. Then the two following statements are equivalent :

\begin{enumerate}
\item $u(\overline{R}_{+})=\overline{R}_{+}$

\item $u$ is a Dynkin diagram automorphism of $\overline{\mathfrak{g}}$
\end{enumerate}
\end{Lem}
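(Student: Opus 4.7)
The equivalence is essentially a standard characterisation of Dynkin diagram automorphisms among Weyl group elements; the plan is to extract it from two facts: $u \in W$ acts as an isometry of $E$ (hence of $\overline{E}$), and simple roots are exactly the indecomposable positive roots.

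\smallskip

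For the implication $(2) \Rightarrow (1)$: by definition a Dynkin diagram automorphism of $\overline{\mathfrak{g}}$ permutes the simple system $\overline{S}$. Every $\beta \in \overline{R}_+$ is a non-negative integer combination of elements of $\overline{S}$, so applying $u$ sends $\beta$ to a non-negative integer combination of elements of $\overline{S}$, which therefore lies in $\overline{R}_+$. Since $u$ is invertible with inverse satisfying the same property, it follows that $u(\overline{R}_+)=\overline{R}_+$.

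\smallskip

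For the converse $(1) \Rightarrow (2)$: assume $u(\overline{R}_+)=\overline{R}_+$. First, $u$ is an orthogonal transformation of $E$, so it preserves the inner product $(\cdot,\cdot)$ and in particular all the numbers $(\alpha,\beta^\vee)$. It therefore suffices to check that $u$ maps the simple system $\overline{S}$ bijectively onto itself, for then the induced permutation of $\overline{S}$ will preserve the Cartan matrix and thus define a Dynkin diagram automorphism of $\overline{\mathfrak{g}}$. To verify this, I would use the characterisation of simple roots as the indecomposable elements of $\overline{R}_+$, namely those $\alpha \in \overline{R}_+$ that cannot be written as $\beta + \gamma$ with $\beta,\gamma \in \overline{R}_+$. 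Take $\alpha \in \overline{S}$ and suppose $u(\alpha) = \beta + \gamma$ with $\beta,\gamma \in \overline{R}_+$; since $u^{-1}$ also permutes $\overline{R}_+$ by hypothesis, applying it would give $\alpha = u^{-1}(\beta) + u^{-1}(\gamma)$ with both summands in $\overline{R}_+$, contradicting the indecomposability of $\alpha$. Hence $u(\alpha)$ is indecomposable, i.e.\ lies in $\overline{S}$. Running the same argument for $u^{-1}$ gives the reverse inclusion, so $u(\overline{S})=\overline{S}$.

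\smallskip

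The two steps together finish the proof. There is no real obstacle: the only point to be a little careful about is the definition of ``Dynkin diagram automorphism of $\overline{\mathfrak{g}}$ realised by $u \in W$'', which we take to mean that the restriction of $u$ to $\overline{E}$ permutes $\overline{S}$ and preserves the Cartan integers $(\alpha,\beta^\vee)$; once this is fixed, both implications reduce to the observations above.
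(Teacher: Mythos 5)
Your proof is correct, and both implications are argued cleanly. You take a genuinely different route from the paper on the nontrivial direction $(1)\Rightarrow(2)$: the paper invokes the structural fact $\mathrm{Aut}(\overline{R}) = \overline{W} \ltimes \mathrm{Aut}(\overline{\Gamma})$ and observes that any nontrivial element of $\overline{W}$ must send some positive root of $\overline{R}_+$ to a negative one, so a permutation of $\overline{R}_+$ must lie in the $\mathrm{Aut}(\overline{\Gamma})$ factor. You instead bypass the semidirect product decomposition entirely and argue directly via the characterisation of $\overline{S}$ as the indecomposable elements of $\overline{R}_+$: since $u$ and $u^{-1}$ both preserve $\overline{R}_+$, they preserve indecomposability, hence permute $\overline{S}$; orthogonality of $u$ then preserves the Cartan integers, giving the diagram automorphism. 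Your argument is more elementary and self-contained, at the (small) cost of not exhibiting where $u$ sits inside $\mathrm{Aut}(\overline{R})$; the paper's version makes that structural position explicit, which may be aesthetically useful but is not needed elsewhere in the text. Your closing remark about what ``Dynkin diagram automorphism realised by $u\in W$'' should mean (restriction to $\overline{E}$ permuting $\overline{S}$ and preserving Cartan integers) is a useful clarification that the paper leaves implicit.
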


\begin{proof}
When $u$ is a Dynkin diagram automorphism of $\overline{\mathfrak{g}}$, we
clearly have $u(\overline{R}_{+})=\overline{R}_{+}$.\ Now assume $u(%
\overline{R}_{+})=\overline{R}_{+}$. Then we have $u(\overline {R})=%
\overline{R}$ and $u$ is an automorphism of the root system $\overline {R}$%
.\ It is known that $\mathrm{Aut}(\overline{R})=\overline{W}\ltimes\mathrm{%
Aut}(\overline{\Gamma})$ where $\overline{\Gamma}$ is the Dynkin diagram of $%
\overline{R}$ i.e. $\mathrm{Aut}(\overline{R})$ is the semidirect product of 
$\overline{W}$ (which is normal in $\mathrm{Aut}(\overline{R})$) with $%
\mathrm{Aut}(\overline{\Gamma})$.\ Since $u(\overline {R}_{+})=\overline{R}%
_{+}$ the element $u$ belongs in fact in $\mathrm{Aut}(\overline{\Gamma})$
(otherwise $u$ would send at least a positive root of $\overline{R}_{+}$ on
a negative root).
\end{proof}

\bigskip

\begin{Prop}
\label{Prop_direct} Let $\mu ,\nu \in \overline{P}_{+}$. Assume that there
exists $u\in W$ such that $u(\overline{R}_{+})=\overline{R}_{+}$ and $\nu
=u(\mu )$ (or equivalently, $\mu $ and $\nu $ are conjugate by a Dynkin
diagram automorphism of $\overline{\mathfrak{g}}$ lying in the Weyl group of 
$\mathfrak{g}$). Then $H_{\mu }=H_{\nu }$.
\end{Prop}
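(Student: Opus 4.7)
The plan is to leverage the closed formula
\begin{equation*}
H_{\mu}=\sum_{\overline{w}\in\overline{W}}\varepsilon(\overline{w})\,h_{\mu+\overline{\rho}-\overline{w}(\overline{\rho})}
\end{equation*}
established at the end of Section 4.1, together with the two basic symmetries $h_{w(\lambda)}=h_{\lambda}$ for all $w\in W$ (which follows from $K_{\lambda,\mu}=K_{\lambda,w(\mu)}$) and $\varepsilon$ being a group homomorphism. The argument should then reduce to pure bookkeeping.

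First I would record three elementary consequences of the hypothesis $u(\overline{R}_{+})=\overline{R}_{+}$.
\textbf{(a)} Since $\overline{\rho}$ is half the sum of the roots of $\overline{R}_{+}$ and $u$ permutes $\overline{R}_{+}$, we get $u(\overline{\rho})=\overline{\rho}$.
\textbf{(b)} For every $\alpha\in\overline{R}_{+}$ we have $u s_{\alpha} u^{-1}=s_{u(\alpha)}$ with $u(\alpha)\in\overline{R}_{+}$, so conjugation by $u$ is a bijection of $\overline{W}$ onto itself.
\textbf{(c)} Because $\varepsilon$ is a homomorphism, $\varepsilon(u^{-1}\overline{w}u)=\varepsilon(\overline{w})$ for every $\overline{w}\in\overline{W}$.

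The computation is then direct. Starting from
\begin{equation*}
H_{\nu}=H_{u(\mu)}=\sum_{\overline{w}\in\overline{W}}\varepsilon(\overline{w})\,h_{u(\mu)+\overline{\rho}-\overline{w}(\overline{\rho})},
\end{equation*}
I apply the $W$-invariance of $h$ (with $u^{-1}\in W$) inside each term to rewrite its subscript as $\mu+u^{-1}(\overline{\rho})-u^{-1}\overline{w}(\overline{\rho})$. Using \textbf{(a)} this becomes $\mu+\overline{\rho}-(u^{-1}\overline{w}u)(\overline{\rho})$. Setting $\overline{w}'=u^{-1}\overline{w}u$ and reindexing the sum via the bijection from \textbf{(b)}, and using \textbf{(c)} to match the signs, the sum collapses to $\sum_{\overline{w}'\in\overline{W}}\varepsilon(\overline{w}')\,h_{\mu+\overline{\rho}-\overline{w}'(\overline{\rho})}=H_{\mu}$.

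I do not anticipate a genuine obstacle: the one subtlety is simply checking that $u$ normalizes $\overline{W}$, but this is immediate from the hypothesis that $u$ preserves $\overline{R}_{+}$. The equivalence mentioned parenthetically in the statement (with Dynkin diagram automorphisms) is precisely the content of the lemma proved just above the proposition, so no further justification is needed on that point.
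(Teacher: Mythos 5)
Your proof is correct and follows essentially the same route as the paper: both start from $H_{\mu}=\sum_{\overline{w}\in\overline{W}}\varepsilon(\overline{w})h_{\mu+\overline{\rho}-\overline{w}(\overline{\rho})}$, use the $W$-invariance $h_{w(\beta)}=h_{\beta}$, the fixed point $u(\overline{\rho})=\overline{\rho}$, and the fact that $u$ normalizes $\overline{W}$ (so conjugation by $u$ is an $\varepsilon$-preserving bijection of $\overline{W}$) to reindex the sum. The only cosmetic difference is the order of operations — you apply $u^{-1}$ to the subscripts first and then reindex, while the paper reindexes inside the argument of $H$ before applying $h_{u(\beta)}=h_{\beta}$.
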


\begin{proof}
With the previous notation, we have%
\begin{equation*}
H_{\mu}=H\bigg(\prod_{\alpha\in\overline{R}_{+}}(1-e^{\alpha})e^{\mu }\bigg)%
\text{ and } H_{\nu}=H\bigg(\prod_{\alpha\in\overline{R}_{+}}(1-e^{%
\alpha})e^{\nu}\bigg). 
\end{equation*}
Since $u(\overline{R}_{+})=\overline{R}_{+}$ we see that $u(\overline
\rho)=\overline\rho$ and that $u\overline{W}u^{-1}=\overline{W}$ (indeed, $%
us_{\alpha}u^{-1}=s_{u\alpha}$ for all $\alpha\in\overline{R}$). Therefore 
\begin{equation*}
\prod_{\alpha\in\overline{R}_{+}}(1-e^{\alpha})e^{\nu}=\sum_{w\in\overline{W}%
}\varepsilon(w)e^{\nu+\overline{\rho}-w(\overline{\rho})}=\sum_{w\in 
\overline{W}}\varepsilon(w)e^{u(\mu)+u(\overline{\rho})-uw(u^{-1}(\overline{%
\rho}))}=\sum_{w\in\overline{W}}\varepsilon(w)e^{u(\mu +\overline{\rho}-w(%
\overline{\rho}))}. 
\end{equation*}
It follows that 
\begin{equation*}
H_{\nu}=H\bigg(\sum_{w\in\overline{W}}\varepsilon(w)e^{u(\mu+\overline{\rho }%
-w(\overline{\rho}))}\bigg) =\sum_{w\in\overline{W}}\varepsilon (w)h_{u(\mu+%
\overline{\rho}-w(\overline{\rho}))}=\sum_{w\in\overline{W}%
}\varepsilon(w)h_{\mu+\overline{\rho}-w(\overline{\rho})}=H_{\mu}
\end{equation*}
since $h_{w(\beta)}=h_{\beta}$ for any $w\in W$.
\end{proof}

\bigskip

We conjecture that the converse is true:

\begin{conj}
\label{conj}Consider $\mu,\nu\in\overline{P}_{+}$. Then we have $H_{\mu
}=H_{\nu}$ if and only if there exists $u$ in $W$ such that $u(\overline {R}%
_{+})=\overline{R}_{+}$ and $\nu=u(\mu)$ or equivalently, $\mu$ and $\nu$
are conjugate by a Dynkin diagram automorphism of $\overline{\mathfrak{g}}$
lying in the Weyl group of $\mathfrak{g}$.
\end{conj}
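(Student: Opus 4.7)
The natural starting point is the formula
\[
H_\mu = \sum_{\overline{w} \in \overline{W}} \varepsilon(\overline{w})\, h_{\mu + \overline{\rho} - \overline{w}(\overline{\rho})},
\]
combined with the identity $h_\beta = h_{w(\beta)}$ for all $w \in W$ and the fact that $\{h_\lambda : \lambda \in P_+\}$ is a $\mathbb{Z}$-basis of $\mathbb{G}$. Under the assumption $H_\mu = H_\nu$, reducing each index to its unique dominant $W$-representative yields a system of combinatorial identities matching the coefficients of $h_\lambda$ on both sides. The plan is to extract from this system enough information to recover $\mu$ up to the claimed equivalence, by focusing on ``extremal'' dominant weights $\lambda$ appearing in the expansion $H_\mu = \sum_\lambda m_\mu^\lambda s_\lambda$.

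The key observation driving the approach is that for every $u \in U$ with $u(\mu) \in P_+$, the multiplicity $m_\mu^{u(\mu)}$ is nonzero. Indeed $\mu = u^{-1}(u(\mu))$ lies in the $W$-orbit of the highest weight of $V(u(\mu))$, so its weight space is one-dimensional; and any potential raising $\mu + \alpha$ with $\alpha \in \overline{R}_+$ would be sent by $u$ to $u(\mu) + u(\alpha)$ with $u(\alpha) \in R_+$, contradicting that $u(\mu)$ is the highest weight. Thus the $\mu$-weight vector of $V(u(\mu))$ is a highest weight vector for $\overline{\mathfrak{g}}$ and $m_\mu^{u(\mu)} = 1$. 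The central technical step I would then try to establish is that the set
\[
\mathcal{M}(\mu) = \{u(\mu) : u \in U,\ u(\mu) \in P_+\}
\]
coincides with the set of minimal elements of $\Omega(\mu) := \{\lambda \in P_+ : m_\mu^\lambda \neq 0\}$ under the dominance order $\leq$.

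Assuming the characterisation of $\mathcal{M}(\mu)$ is established, the equality $H_\mu = H_\nu$ forces $\Omega(\mu) = \Omega(\nu)$ and hence $\mathcal{M}(\mu) = \mathcal{M}(\nu)$. Choosing $u \in U$ with $u(\mu) \in \mathcal{M}(\mu)$ and matching it with $u' \in U$ such that $u'(\nu) = u(\mu)$, we obtain $\nu = (u')^{-1} u(\mu)$. To conclude that $(u')^{-1} u$ can be replaced by a Dynkin diagram automorphism of $\overline{\mathfrak{g}}$ lying in $W$, I would decompose $(u')^{-1} u = u_0 \overline{w}$ using the unique factorisation of Proposition~\ref{Lem-U1}(3), then invoke Lemma~\ref{LemU2} together with Lemmas~\ref{LemU3}--\ref{LemU4} to argue that $\overline{w}$ must fix $\mu$ and that $u_0(\overline{R}_+) = \overline{R}_+$. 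The element $u_0$ is then a Dynkin automorphism of $\overline{\mathfrak{g}}$ with $\nu = u_0(\mu)$, as required.

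The main obstacle lies in characterising $\mathcal{M}(\mu)$ as the full set of minimal elements of $\Omega(\mu)$. When $\mu$ is far from the walls of the $\overline{\mathfrak{g}}$-chamber (the generic situation treated in Section~5), all weights $\mu + \overline{\rho} - \overline{w}(\overline{\rho})$ are $W$-regular and the expansion of $H_\mu$ in the $h$-basis contains no unwanted cancellations, making the picture transparent. Near the walls, however, distinct $\overline{w}$'s can yield $W$-conjugate shifted weights, producing cancellations that may either introduce spurious minimal elements into $\Omega(\mu)$ or delete genuine members of $\mathcal{M}(\mu)$. Controlling these phenomena uniformly across root systems appears to require either a new idea or specific hypotheses such as those of Sections~6 and~7 (dominance of $\mu + 2\overline{\rho}$, or type $A$ where Rajan's tensor-product theorem can be leveraged), explaining why the conjecture is proved only conditionally in this paper.
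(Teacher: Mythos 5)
The statement you are trying to prove is stated as a conjecture in the paper; the paper itself establishes only the ``if'' direction unconditionally (Proposition \ref{Prop_direct}) and the converse under additional hypotheses (collected in Theorem \ref{Th_final}). Your observation that $m_\mu^{u(\mu)}=1$ for $u\in U$ with $u(\mu)\in P_+$ is correct and nicely argued, but notice that the set $\mathcal{M}(\mu)$ you introduce is always a singleton: any $u\in U$ with $u(\mu)\in P_+$ yields the same weight, namely the unique dominant representative $\mu^+$ of the $W$-orbit of $\mu$, so $\mathcal{M}(\mu)=\{\mu^+\}$. The characterisation you flag as the ``main obstacle'' is therefore actually easy and holds with no genericity hypothesis: if $m_\mu^\lambda\neq 0$ then $\mu$ is a weight of $V(\lambda)$, hence $\mu^+\leq\lambda$, so $\mu^+$ is the unique minimal element of $\Omega(\mu)$. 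What this characterisation buys you is only that $\mu$ and $\nu$ are $W$-conjugate, which is exactly Proposition \ref{Prop_memeorbit} of the paper (proved there by a different route, via the unitriangular expansion of $H_\mu$ in the $h$-basis).

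This, however, is strictly weaker than the conjecture, and your concluding paragraph cannot be made to work as stated. $W$-conjugacy of $\mu,\nu\in\overline{P}_+$ does not imply conjugacy by a Dynkin diagram automorphism of $\overline{\mathfrak{g}}$: Remark \ref{rem_Ce} exhibits $\mu=(5,2,2,1\mid 4,3)$ and $\nu=(5,4,3,1\mid 2,2)$ for $\overline{\mathfrak{g}}=\mathfrak{gl}_4\oplus\mathfrak{gl}_2\subset\mathfrak{gl}_6$, which lie in the same $W$-orbit yet satisfy $H_\mu\neq H_\nu$ (and are not conjugate by any Dynkin automorphism, the block sizes being distinct). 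Writing $(u')^{-1}u=u_0\overline{w}$ by Proposition \ref{Lem-U1}(3) and appealing to Lemmas \ref{LemU2}--\ref{LemU4} does not force $u_0(\overline{R}_+)=\overline{R}_+$ from the data you have; you would need to extract information from coefficients beyond the single extremal $\lambda=\mu^+$. That is precisely where the paper's actual work lies: the triangular decomposition on the $h$-basis and the resulting Corollary in Section 5 (when the full shifted orbit $E_\mu$ fits in one chamber), the leading monomial of the function $M_\mu$ giving the $W$-orbit of $\mu+2\overline{\rho}$ in Section 6, and the reduction to Rajan's unique-factorisation theorem for $\mathfrak{gl}_n$ in Section 7. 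So the gap you acknowledge is real, but it has been misplaced; the difficulty is not in locating $\mu^+$ but in upgrading $W$-conjugacy to the sharper conclusion, and your sketch does not indicate how to do that in general.
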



\section{Triangular decomposition of $H_{\protect\mu}$}


\subsection{Decomposition on the $h$-basis}

Let $\mu\in\overline{P}_{+}$ and let $w\in U$ be such that $\mu\in\overline {%
C}_{w}=w^{-1}\overline{C}$. Recall that $R_{+}^{w}=w^{-1}(R_{+})$.\ Since $%
w\in U$, we have that $w(\overline{R}_{+})\subset R_{+}$ which in turn
implies that $\overline{R}_{+}\subset R_{+}^{w}$, that is $\preceq\subset
\leq_{w}.$

\begin{Prop}
\label{Prop_Triang} Let $w\in U$. We have for all $\mu\in\overline{P}_{+}$%
\begin{equation*}
H_{\mu}=h_{\mu}+\sum_{\lambda\in P_{+}^{w},\mu<_{w}\lambda}a_{\lambda,\mu
}h_{\lambda}
\end{equation*}
where for any $\lambda\in P_{+}^{w}$%
\begin{equation*}
a_{\lambda,\mu}=\sum_{\overline{w}\in\overline{W}\mid\mu+\overline{\rho }-%
\overline{w}(\overline{\rho})\in W\cdot\lambda}\varepsilon(\overline{w}). 
\end{equation*}
\end{Prop}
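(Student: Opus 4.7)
My plan is to use the explicit expansion of $H_\mu$ on the $h$-basis derived at the end of Section 4.1, namely
\[
H_{\mu}=\sum_{\overline{w}\in\overline{W}}\varepsilon(\overline{w})\,h_{\mu+\overline{\rho}-\overline{w}(\overline{\rho})},
\]
and then regroup the summands according to which element of $P_{+}^{w}$ each weight $\mu+\overline{\rho}-\overline{w}(\overline{\rho})$ is conjugate to under $W$. Because $h_{\gamma}=h_{w'(\gamma)}$ for any $w'\in W$, I can replace each $h_{\mu+\overline{\rho}-\overline{w}(\overline{\rho})}$ by $h_{\lambda_{\overline{w}}}$, where $\lambda_{\overline{w}}$ denotes the (essentially unique) representative of $W\cdot(\mu+\overline{\rho}-\overline{w}(\overline{\rho}))$ lying in $\overline{C}_{w}\cap P=P_{+}^{w}$. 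Collecting terms gives immediately
\[
H_{\mu}=\sum_{\lambda\in P_{+}^{w}}a_{\lambda,\mu}\,h_{\lambda},\qquad a_{\lambda,\mu}=\!\!\sum_{\overline{w}\in\overline{W}\mid\mu+\overline{\rho}-\overline{w}(\overline{\rho})\in W\cdot\lambda}\!\!\varepsilon(\overline{w}).
\]

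The main task is then to prove the triangularity: $\lambda_{\overline{w}}\geq_{w}\mu$ for every $\overline{w}$, with equality exactly when $\overline{w}=1$. I would split this into two elementary steps. First, using the classical identity $\overline{\rho}-\overline{w}(\overline{\rho})=\sum_{\alpha\in\overline{R}_{+},\,\overline{w}^{-1}(\alpha)\in-\overline{R}_{+}}\alpha$, one sees that $\mu+\overline{\rho}-\overline{w}(\overline{\rho})\succeq\mu$, strictly so when $\overline{w}\neq 1$. Since $w\in U$ implies $\overline{R}_{+}\subset w^{-1}(R_{+})=R_{+}^{w}$, the order $\preceq$ refines $\leq_{w}$, so $\mu+\overline{\rho}-\overline{w}(\overline{\rho})\geq_{w}\mu$. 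Second, by the standard argument of bringing a weight into its chamber using simple reflections in $w^{-1}(S)$ (each such reflection adds a non-negative multiple of a simple root in $w^{-1}(S)$), one obtains $\lambda_{\overline{w}}\geq_{w}\mu+\overline{\rho}-\overline{w}(\overline{\rho})$. Combining these two inequalities yields $\lambda_{\overline{w}}\geq_{w}\mu$.

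The delicate point is to rule out the possibility that $\lambda_{\overline{w}}=\mu$ for some $\overline{w}\neq 1$. For this, I would argue as follows: if $\lambda_{\overline{w}}=\mu$, then $\mu-(\mu+\overline{\rho}-\overline{w}(\overline{\rho}))=\overline{w}(\overline{\rho})-\overline{\rho}$ must be a non-negative $\mathbb{Z}$-linear combination of roots in $R_{+}^{w}$. But it is already a non-positive combination of roots in $\overline{R}_{+}\subset R_{+}^{w}$, so adding the two expressions yields a non-negative combination of roots in $R_{+}^{w}$ equal to zero; all coefficients must vanish, forcing $\overline{w}(\overline{\rho})=\overline{\rho}$, hence $\overline{w}=1$ (the stabilizer of $\overline{\rho}$ in $\overline{W}$ is trivial). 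This shows that the only contribution to $h_{\mu}$ in the sum comes from $\overline{w}=1$ and equals $\varepsilon(1)=1$, while every other $\lambda$ satisfies $\lambda>_{w}\mu$, yielding the stated formula.

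The main obstacle is really the sign/cancellation argument in the last paragraph: one has to verify carefully that the two partial orders $\preceq$ and $\leq_{w}$ interact as required so that the coefficient of $h_{\mu}$ is forced to be exactly $1$. Everything else is bookkeeping from the formula for $H_{\mu}$ and the $W$-invariance of $h_{\bullet}$.
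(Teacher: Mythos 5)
Your proof is correct and follows essentially the same route as the paper: start from the expansion $H_\mu=\sum_{\overline{w}\in\overline{W}}\varepsilon(\overline{w})\,h_{\mu+\overline{\rho}-\overline{w}(\overline{\rho})}$, observe that $\overline{\rho}-\overline{w}(\overline{\rho})$ is a non-zero sum of roots of $\overline{R}_{+}\subset R_{+}^{w}$ when $\overline{w}\neq 1$, replace each $h_{\mu+\overline{\rho}-\overline{w}(\overline{\rho})}$ by $h_{\lambda}$ with $\lambda$ its $P_{+}^{w}$-representative, and regroup. One small remark: the last paragraph, ruling out $\lambda_{\overline{w}}=\mu$ for $\overline{w}\neq 1$, is redundant given what you already established. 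You have $\mu<_{w}\mu+\overline{\rho}-\overline{w}(\overline{\rho})$ (strictly, from the first step) and $\mu+\overline{\rho}-\overline{w}(\overline{\rho})\leq_{w}\lambda_{\overline{w}}$; since $\leq_{w}$ is a partial order (in particular antisymmetric, as $R_{+}^{w}$ lies in an open half-space), composing these gives $\mu<_{w}\lambda_{\overline{w}}$ directly, which is exactly the chain the paper uses. Your extra cancellation argument is in effect a re-derivation of antisymmetry of $\leq_{w}$, so it is harmless but not needed.
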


\begin{proof}
Since $\preceq \subset \leq _{w},$ we have%
\begin{equation*}
H_{\mu }=h_{\mu }+\sum_{\overline{w}\in \overline{W},\overline{w}\neq
1}\varepsilon (\overline{w})h_{\mu +\overline{\rho }-\overline{w}(\overline{%
\rho })}\text{ with }\mu <_{w}\mu +\overline{\rho }-\overline{w}(\overline{%
\rho })\text{ for }\overline{w}\neq 1.
\end{equation*}%
Now for each $w\neq 1$, the orbit of each $\gamma =\mu +\overline{\rho }-%
\overline{w}(\overline{\rho })$ intersects $P_{+}^{w}$ at one point (say $%
\lambda $) and we can use the relations $h_{w(\gamma )}=h_{\gamma }$ for any 
$w\in W$. Moreover, we then have $\gamma \leqslant _{w}\lambda $. We thus
obtain $\mu <_{w}\mu +\overline{\rho }-\overline{w}(\overline{\rho }%
)\leqslant _{w}\lambda $ which gives the unitriangularity of the
decomposition. The coefficients $a_{\lambda ,\mu }$ are then obtained by
gathering the contributions in $h_{\lambda }$ for each $\lambda \in P_{+}^{w}
$.
\end{proof}

\begin{Rem}
\begin{enumerate}
\item For $\mathfrak{g=}\overline{\mathfrak{g}}$, the coefficients $%
a_{\lambda ,\mu }$ are the entries of the inverse matrix $K^{-1}$ where $%
K=(K_{\lambda ,\mu })_{\lambda ,\mu \in P_{+}}$.\ In type $A$, $K$ is the
Kostka matrix.\ Obtaining a combinatorial formula for the coefficients of $%
K^{-1}$ is already a nontrivial problem (see \cite{Dua} and the references
therein). \ As far as we are aware no such description for the coefficients
of $K^{-1}$ exists for other root systems (and thus also for the
coefficients $a_{\lambda ,\mu }$ associated to a general Levi subalgebra).

\item We can also deduce from Propositions \ref{Lem-U1} and \ref{Prop_Triang}
that for any $u\in U$, the set $\{H_{\lambda}\mid\lambda\in P_{+}^{u}\}$ is
a basis of $\mathbb{G}$.
\end{enumerate}
\end{Rem}


\subsection{Consequences}

\begin{Prop}
\label{Prop_memeorbit} Let $\mu$ and $\nu$ be dominant weights in $\overline{%
P}_{+}$ such that $H_{\mu}=H_{\nu}$. Then, there exists $\tau\in W$ such
that $\tau(\nu)=\mu$. In particular, if $\mu$ and $\nu$ belong to the same
closed Weyl chamber for $\mathfrak{g}$, we have $\tau=1$ and $\mu=\nu$.
\end{Prop}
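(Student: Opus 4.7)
The plan is to compare the expansions of $H_\mu$ and $H_\nu$ in the $h$-basis grouped by $W$-orbits, and to isolate the orbit $W\cdot\mu$ via a norm estimate. Starting from the formula
\[
H_\mu = \sum_{\overline{w}\in \overline{W}} \varepsilon(\overline{w})\, h_{\mu + \overline{\rho} - \overline{w}(\overline{\rho})}
\]
derived just above the statement, and using that $h_\gamma$ depends only on the $W$-orbit of $\gamma$, I will rewrite this as $H_\mu = \sum_{\lambda\in P_+} c_{\lambda,\mu}\, h_\lambda$ with
\[
c_{\lambda,\mu} = \sum_{\overline{w}\in\overline{W},\,\mu+\overline{\rho}-\overline{w}(\overline{\rho})\in W\cdot\lambda} \varepsilon(\overline{w}).
\]

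The first step will be the norm inequality
\[
\|\mu + \overline{\rho} - \overline{w}(\overline{\rho})\|^2 = \|\mu\|^2 + 2(\mu, \overline{\rho} - \overline{w}(\overline{\rho})) + \|\overline{\rho} - \overline{w}(\overline{\rho})\|^2 \;\geq\; \|\mu\|^2,
\]
valid for $\mu\in\overline{P}_+$ and $\overline{w}\in\overline{W}$, with equality iff $\overline{w}=1$. The cross term is nonnegative because $\overline{\rho}-\overline{w}(\overline{\rho})$ decomposes as a sum of roots of $\overline{R}_+$ and $\mu\in\overline{P}_+$; and the squared term vanishes only when $\overline{w}(\overline{\rho})=\overline{\rho}$, which forces $\overline{w}=1$ since $\overline{\rho}$ is regular for $\overline{W}$. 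It follows that every orbit $W\cdot\lambda$ appearing in the support of $H_\mu$ satisfies $\|\lambda\|\geq\|\mu\|$, and that the orbit $W\cdot\mu$ receives a contribution only from $\overline{w}=1$, giving $c_{\mu,\mu}=1$ (this nonvanishing could equivalently be extracted from Proposition~\ref{Prop_Triang}).

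From the assumption $H_\mu=H_\nu$ I then get $c_{\mu,\nu}=c_{\mu,\mu}=1$, so there exists $\overline{w}\in\overline{W}$ with $\nu+\overline{\rho}-\overline{w}(\overline{\rho})\in W\cdot\mu$. Applying the norm estimate with $\nu$ in place of $\mu$ yields $\|\mu\|\geq\|\nu\|$, and the symmetric argument with the roles of $\mu$ and $\nu$ swapped gives the reverse inequality. Hence $\|\mu\|=\|\nu\|$, the norm inequality collapses to equality, and the equality case of the estimate forces $\overline{w}=1$. Therefore $\nu\in W\cdot\mu$, which produces the required $\tau\in W$ with $\tau(\nu)=\mu$.

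For the last sentence, if $\mu$ and $\nu$ both lie in a common closed Weyl chamber $\overline{C}_w$, then $w(\mu),w(\nu)\in\overline{C}$; since $\overline{C}$ is a fundamental domain for $W$, two elements of the same $W$-orbit in $\overline{C}$ must coincide, so $\mu=\nu$ and $\tau=1$ works. I do not foresee a serious obstacle here: the only delicate point is ensuring that the coefficient of the orbit $W\cdot\mu$ in $H_\mu$ is truly nonzero (no cancellation among the signed contributions), and the norm estimate resolves this by isolating $\overline{w}=1$ as its unique contributor.
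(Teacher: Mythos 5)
Your proof is correct, but it takes a genuinely different route from the paper's. The paper fixes $w,w'\in U$ with $\mu\in\overline{C}_w$ and $\nu\in\overline{C}_{w'}$, invokes the unitriangular expansion of Proposition~\ref{Prop_Triang} in the basis $\{h_\lambda\mid\lambda\in P_+^w\}$ with respect to the chamber-adapted order $\leq_w$, transports the $H_\nu$-expansion into that same basis via $\tau=w^{-1}w'$, and equates the two $\leq_w$-minimal indices to conclude $\mu=\tau(\nu)$. You instead isolate the orbit $W\cdot\mu$ by a Euclidean norm estimate: from $\|\mu+\overline{\rho}-\overline{w}(\overline{\rho})\|^2=\|\mu\|^2+2(\mu,\overline{\rho}-\overline{w}(\overline{\rho}))+\|\overline{\rho}-\overline{w}(\overline{\rho})\|^2\geq\|\mu\|^2$ (using that $\overline{\rho}-\overline{w}(\overline{\rho})$ is a nonnegative sum of roots of $\overline{R}_+$ and $\mu\in\overline{P}_+$), with equality iff $\overline{w}=1$, you conclude that $W\cdot\mu$ is the unique orbit of minimal norm in the $h$-support of $H_\mu$ and that it carries coefficient $\varepsilon(1)=1$, so no cancellation can kill it. Then $H_\mu=H_\nu$ forces $\|\mu\|=\|\nu\|$ by symmetry, the equality case rigidifies, and $\nu\in W\cdot\mu$. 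Your argument is more self-contained: it uses only $W$-invariance of the inner product and the $W$-invariance of $h_\gamma$, and it does not need the set $U$, the chamber-dependent orders $\leq_w$, or Proposition~\ref{Prop_Triang}. What the paper's approach buys in exchange is consistency with the subsequent corollary, which reuses the same $\leq_w$-triangular expansion machinery to get the stronger conclusion $\tau(\overline{R}_+)=\overline{R}_+$ under the hypothesis that $E_\mu$ and $E_\nu$ each lie in a single closed chamber. One small notational point: the coefficient you call $c_{\mu,\mu}$ (resp.\ $c_{\mu,\nu}$) should strictly be indexed by the unique dominant representative $\lambda_\mu$ of $W\cdot\mu$ in $P_+$, since $\mu\in\overline{P}_+$ need not be $\mathfrak{g}$-dominant; the argument is unaffected.
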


\begin{proof}
Assume that $\mu $ belongs to $\overline{P}_{+}^{w}$ and $\nu $ belongs to $%
\overline{P}_{+}^{w^{\prime }}$ with $w,w^{\prime }$ in $U$. Let $\tau \in W$
be such that $w^{\prime }=w\tau $. 
We then have $R_{+}^{w^{\prime }}=\tau ^{-1}(R_{+}^{w})$ and $%
P_{+}^{w^{\prime }}=\tau ^{-1}(P_{+}^{w})$. Moreover $\mu <_{w}\gamma $ if
and only if $\tau ^{-1}(\mu )<_{w^{\prime }}\tau ^{-1}(\gamma )$. On the one
hand, using Proposition \ref{Prop_Triang}, we get%
\begin{align*}
H_{\nu }& =h_{\nu }+\sum_{\lambda \in P_{+}^{w^{\prime }},\nu <_{w^{\prime
}}\lambda }a_{\lambda ,\nu }h_{\lambda } \\
& =h_{\nu }+\sum_{\lambda \in P_{+}^{w},\tau (\nu )<_{w}\lambda }a_{\tau
^{-1}(\lambda ),\nu }h_{\tau ^{-1}(\lambda )}.
\end{align*}%
Since $h_{w(\beta )}=h_{\beta }$ for all $w\in W$ and $\beta \in P$, this
can be rewritten under the form%
\begin{equation*}
H_{\nu }=h_{\tau (\nu )}+\sum_{\lambda \in P_{+}^{w},\tau (\nu )<_{w}\lambda
}a_{\tau ^{-1}(\lambda ),\nu }h_{\lambda }.
\end{equation*}%
On the other hand we have 
\begin{equation*}
H_{\mu }=h_{\mu }+\sum_{\lambda \in P_{+}^{w},\mu <_{w}\lambda }a_{\lambda
,\mu }h_{\lambda }.
\end{equation*}%
So $H_{\nu }=H_{\mu }$ implies that $h_{\tau (\nu )}=h_{\mu }$ by comparing
the indices of the basis vectors of $\{h_{\lambda }\mid \lambda \in
P_{+}^{w}\}$ which are minimal for the order $\leq _{w}$. Hence $\mu =\tau
(\nu )$ as desired.
\end{proof}

\medskip

\begin{Rem}
If $H_{\mu}=H_{0}$ (i.e. we have $\nu=0$), then $\mu=0$ since $\mu$ et $0 $
always belong to the same closed Weyl chamber.
\end{Rem}

\medskip

For any weight $\mu \in \overline{P}_{+},$ define the set $E_{\mu }=\{\mu +%
\overline{\rho }-\overline{w}(\overline{\rho })\mid \overline{w}\in 
\overline{W}\}$. Since the stabilizer of $\overline{\rho }$ under the action
of $\overline{W}$ reduces to $\{1\}$, the cardinality of $E_{\mu }$ is equal
to that of $\overline{W}$. The following corollary shows that the conjecture
holds when each of the sets $E_{\mu }$ and $E_{\nu }$ is contained in a
closed Weyl chamber. This happens in particular when $\mu $ and $\nu $ are
sufficiently far from the walls of the Weyl chambers in which they appear.

\begin{Cor}
Let $\mu$ and $\nu$ be two dominant weights in $\overline{P}_{+}$. Assume
that there exist $w\in W$ such that $E_{\mu}\subset P_{+}^{w}$ and $%
w^{\prime}\in W$ such that $E_{\nu}\subset P_{+}^{w^{\prime}}$.\ Then $%
H_{\mu}=H_{\nu}$ implies that $\nu=\tau(\mu)$ and $\tau(\overline{R}_{+})=%
\overline{R}_{+}$ with $\tau\in W$ such that $w^{\prime}=w\tau$.
\end{Cor}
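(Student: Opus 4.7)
The plan is to take the $\tau \in W$ produced by Proposition \ref{Prop_memeorbit}, which already satisfies $w^{\prime}=w\tau$ and $\mu=\tau(\nu)$, and to show that this same $\tau$ actually fixes $\overline{R}_{+}$ setwise. The extra hypothesis $E_{\mu}\subset P_{+}^{w}$ (and similarly for $\nu$) will make the expansion of $H_{\mu}$ in the $h$-basis remarkably rigid, and comparing it to the analogous expansion of $H_{\nu}$ will pin down first $\tau(\overline{\rho})$ and eventually $\tau(\overline{R}_{+})$.

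First, I would exploit the fact that, because $E_{\mu}\subset P_{+}^{w}$ and the stabiliser of $\overline{\rho}$ in $\overline{W}$ is trivial, the identity $H_{\mu}=\sum_{\overline{w}\in \overline{W}}\varepsilon(\overline{w})\,h_{\mu+\overline{\rho}-\overline{w}(\overline{\rho})}$ is already a signed, multiplicity-free expansion in the basis $\{h_{\lambda}\mid\lambda\in P_{+}^{w}\}$ of $\mathbb{G}$. The analogous hypothesis for $\nu$ combined with $h_{\beta}=h_{\tau(\beta)}$ rewrites the expansion of $H_{\nu}$ as
\[
H_{\nu}=\sum_{\overline{w}\in\overline{W}}\varepsilon(\overline{w})\,h_{\mu+\tau(\overline{\rho})-\tau\overline{w}(\overline{\rho})},
\]
again with all indices pairwise distinct in $P_{+}^{w}$. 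Equating these two expressions coefficient by coefficient produces a sign-preserving bijection $f$ of $\overline{W}$ such that $\overline{\rho}-\overline{w}(\overline{\rho})=\tau(\overline{\rho})-\tau f(\overline{w})(\overline{\rho})$ for every $\overline{w}\in\overline{W}$.

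Next, summing this identity over $\overline{w}\in\overline{W}$ and using that $\sum_{\overline{w}}\overline{w}(\overline{\rho})=0$ in $\overline{E}$ (the $\overline{W}$-fixed subspace of $\overline{E}$ being zero) collapses both sides to $|\overline{W}|\,\overline{\rho}$ and $|\overline{W}|\,\tau(\overline{\rho})$ respectively, yielding $\tau(\overline{\rho})=\overline{\rho}$. Plugging this back shows that $\tau$ permutes the orbit $\overline{W}\cdot\overline{\rho}$, which linearly spans $\overline{E}$ because $\overline{\rho}-s_{\alpha}(\overline{\rho})=\alpha$ for every simple root $\alpha\in\overline{S}$. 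Hence $\tau$ preserves $\overline{E}$; combined with $\tau(R)=R$ and $\overline{R}=R\cap\overline{E}$, this forces $\tau(\overline{R})=\overline{R}$.

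The final step is to upgrade $\tau(\overline{R})=\overline{R}$ to $\tau(\overline{R}_{+})=\overline{R}_{+}$, and this is where I would use crucially that both $w$ and $w^{\prime}=w\tau$ lie in $U$: from $w^{\prime}\in U$ one gets $\tau(\overline{R}_{+})\subset w^{-1}(R_{+})=R_{+}^{w}$, while $w\in U$ gives $\overline{R}_{+}\subset R_{+}^{w}$ and hence $R_{+}^{w}\cap\overline{R}=\overline{R}_{+}$ (otherwise some pair $\alpha,-\alpha\in\overline{R}$ would both sit in $R_{+}^{w}$). Intersecting yields $\tau(\overline{R}_{+})\subset\overline{R}_{+}$, and equality follows by cardinality. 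I expect this last promotion to be the true obstacle: the stabiliser of $\overline{\rho}$ in the full group $W$ is in general strictly larger than the subgroup of Dynkin diagram automorphisms of $\overline{R}$, so the implication $\tau(\overline{\rho})=\overline{\rho}\Rightarrow\tau(\overline{R}_{+})=\overline{R}_{+}$ genuinely relies on the additional information $w,w\tau\in U$ coming from Proposition \ref{Prop_memeorbit}.
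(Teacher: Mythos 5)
Your proof is correct, but it takes a genuinely different route from the paper. The paper, having $\tau(E_{\nu})=E_{\mu}$ (since $\tau(P_{+}^{w'})=P_{+}^{w}$), tests $\tau$ on the specific elements $\nu+\alpha=\nu+\overline{\rho}-s_{\alpha}(\overline{\rho})$ of $E_{\nu}$ with $\alpha$ a simple root of $\overline{R}$: each is sent to some $\mu+\overline{\rho}-\overline{w}'(\overline{\rho})$, which, after cancelling $\tau(\nu)=\mu$, forces $\tau(\alpha)=\overline{\rho}-\overline{w}'(\overline{\rho})$ to be a nonnegative sum of roots of $\overline{R}_{+}$; then $\tau(\alpha)\in R\cap\overline{E}=\overline{R}$ and positivity give $\tau(\alpha)\in\overline{R}_{+}$ directly, one simple root at a time. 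You instead use a global averaging argument: summing the matched indices over the sign-preserving bijection $f$ of $\overline{W}$ and using $\sum_{\overline{w}}\overline{w}(\overline{\rho})=0$ to isolate $\tau(\overline{\rho})=\overline{\rho}$, then observing that $\tau$ permutes $\overline{W}\cdot\overline{\rho}$ and hence preserves $\overline{E}$, giving $\tau(\overline{R})=\overline{R}$. Both proofs ultimately hinge on the identity $\overline{R}=R\cap\overline{E}$ from Section~3; the paper's argument is shorter and more local, while yours makes the role of $\overline{\rho}$ more transparent.

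One inaccuracy worth flagging: your final paragraph is more complicated than necessary, and the parting remark that upgrading to $\tau(\overline{R}_{+})=\overline{R}_{+}$ ``genuinely relies'' on $w,w\tau\in U$ is not right. Once you have $\tau(\overline{\rho})=\overline{\rho}$ and $\tau(\overline{R})=\overline{R}$, the conclusion is immediate: for $\alpha\in\overline{R}_{+}$ one has $(\tau(\alpha),\overline{\rho})=(\tau(\alpha),\tau(\overline{\rho}))=(\alpha,\overline{\rho})>0$, so $\tau(\alpha)\in\overline{R}_{+}$. The detour through $R_{+}^{w}$ and $R_{+}^{w'}$ is valid but redundant. (It is true that $\tau(\overline{\rho})=\overline{\rho}$ \emph{alone}, without $\tau(\overline{R})=\overline{R}$, would not suffice -- but you already have $\tau(\overline{R})=\overline{R}$ by that point.)
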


\begin{proof}
All the elements of $E_{\mu}$ belong to $P_{+}^{w}$. They thus belong to
distinct $W$-orbits.\ Hence the decomposition of $H_{\mu}$ in the basis $%
\{h_{\lambda}\mid\lambda\in P_{+}^{w}\}$ is%
\begin{equation*}
H_{\mu}=h_{\mu}+\sum_{\overline{w}\in\overline{W},\overline{w}\neq
1}\varepsilon(\overline{w})h_{\mu+\overline{\rho}-\overline{w}(\overline{%
\rho })}.   \label{Hmu}
\end{equation*}
Similarly, the elements of $E_{\nu}$ belong to distinct $W$-orbits. Hence
the decomposition of $H_{\nu}$ in the basis $\{h_{\lambda}\mid\lambda\in
P_{+}^{w^{\prime}}\}$ is%
\begin{equation*}
H_{\nu}=h_{\nu}+\sum_{\overline{w}^{\prime}\in\overline{W},\overline {w}%
^{\prime}\neq1}\varepsilon(\overline{w}^{\prime})h_{\nu+\overline{\rho }-%
\overline{w}^{\prime}(\overline{\rho})}. 
\end{equation*}
Since $H_{\nu}=H_{\mu}$, we see that there exists $\tau\in W$ such that $%
\tau(\nu)=\mu$ by the previous proposition. Further, we know that $\tau$ is
such that $P_{+}^{w^{\prime}}=\tau^{-1}(P_{+}^{w})$ thus we have $%
\tau(E_{\nu })=E_{\mu}$. Let $\alpha\in\overline{R}_{+}$ and $\overline{w}%
=s_{\alpha}$. Then $\overline{w}(\overline{\rho})-\overline{\rho}=\alpha$
and we see that there exists an element $\overline{w}^{\prime}\in\overline{W}
$ such that $\tau(\nu+\alpha)=\mu+\overline{\rho}-\overline{w}^{\prime}(%
\overline{\rho})$. In turn, this implies $\tau(\alpha)=\overline{\rho}-%
\overline{w}^{\prime }(\overline{\rho})$ as $\tau(\nu)=\mu$ and $\tau(\alpha)
$ is a sum of positive roots in $\overline{R}_{+}$. But $\tau(\alpha)$ also
lies in $R$, hence $\tau(\alpha)\in\overline{R}_{+}$; see Section \ref%
{section3}. We have shown that $\tau$ maps $\overline{R}_{+}$ onto itself as
expected.
\end{proof}


\section{The functions $M_{\protect\mu}$}

We know give an equivalent formulation of our problem in terms of parabolic
analogues of monomial functions.

\subsection{Decomposition on the monomial functions}

For any weight $\gamma\in P$, set $\mathrm{m}_{\gamma}=\sum_{w\in
W}e^{w(\gamma)}$ so that $\mathrm{m}_{\gamma}$\footnote{%
Our function $\mathrm{m}_{\gamma}$ slightly differs from the usual monomial
function $m_{\gamma}=\frac{1}{|W_{\gamma}|}\sum_{w\in W}e^{w(\gamma)}$ where 
$W_{\gamma}$ is the stabilizer of $\gamma$ under the action of $W$.} is the
image of $e^{\gamma}$ by the symmetrization operator 
\begin{equation*}
\mathcal{M}:\left\{ 
\begin{array}{c}
\mathbb{Z}[P]\rightarrow\mathbb{Z}[P]^{W} \\ 
e^{\gamma}\mapsto\mathrm{m}_{\gamma}%
\end{array}
\right. . 
\end{equation*}
We clearly have $\mathrm{m}_{w(\gamma)}=\mathrm{m}_{\gamma}$ for any $w\in W$%
. Also $\{\mathrm{m}_{\lambda}\mid\lambda\in P_{+}^{w}\}$ is a basis of $%
\mathbb{G}$.\ Given $\mu\in P$, set%
\begin{equation*}
M_{\mu}:=\mathcal{M}(\prod_{\alpha\in\overline{R}_{+}}(1-e^{\alpha})e^{\mu
})=\sum_{\overline{w}\in\overline{W}}\varepsilon(\overline{w})\mathrm{m}%
_{\mu+\overline{\rho}-\overline{w}(\overline{\rho})}. 
\end{equation*}

\begin{Lem}
\ \label{Lemme_M}

\begin{enumerate}
\item We have 
\begin{equation*}
M_{\mu}=\sum_{\lambda\in P_{+}}a_{\lambda,\mu}\mathrm{m}_{\lambda}\text{
with }a_{\lambda,\mu}=\sum_{\overline{w}\in\overline{W}\mid\mu+\overline{%
\rho }-\overline{w}(\overline{\rho})\in W\cdot\lambda}\varepsilon(\overline{w%
}).   \label{M}
\end{equation*}

\item Consider $\mu,\nu\in\overline{P}_{+}$. Then $H_{\mu}=H_{\nu}$ if and
only if $M_{\mu}=M_{\nu}$.
\end{enumerate}
\end{Lem}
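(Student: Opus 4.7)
The plan is to leverage the parallel structure between $M_\mu$ and $H_\mu$: both are obtained by applying a $W$-symmetrizing linear operation to $\overline{\bigtriangledown}\,e^\mu$ (the operator $\mathcal{M}$ in one case, $H$ in the other), and they should admit expansions with identical coefficients in two bases of $\mathbb{G}$ that are in bijection under the change-of-basis map $h_\lambda \leftrightarrow \mathrm{m}_\lambda$.

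First I would prove (1) by direct expansion of the defining formula
$M_\mu = \sum_{\overline{w} \in \overline{W}} \varepsilon(\overline{w}) \mathrm{m}_{\mu+\overline{\rho}-\overline{w}(\overline{\rho})}$.
For each $\overline{w}$, let $\lambda \in P_+$ be the unique dominant representative of the $W$-orbit of $\mu+\overline{\rho}-\overline{w}(\overline{\rho})$. Since $\mathrm{m}_\gamma$ depends only on the $W$-orbit of $\gamma$, the term $\mathrm{m}_{\mu+\overline{\rho}-\overline{w}(\overline{\rho})}$ equals $\mathrm{m}_\lambda$, and regrouping the sum according to $\lambda$ produces exactly the coefficient $a_{\lambda,\mu}$ stated in the lemma.

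For (2) I would apply the same regrouping to $H_\mu$. Starting from the formula
$H_\mu = \sum_{\overline{w} \in \overline{W}} \varepsilon(\overline{w}) h_{\mu+\overline{\rho}-\overline{w}(\overline{\rho})}$
obtained at the end of Section~4.1, and using $h_{w\gamma} = h_\gamma$, the identical regrouping procedure gives $H_\mu = \sum_{\lambda \in P_+} a_{\lambda,\mu}\, h_\lambda$ with exactly the same coefficients $a_{\lambda,\mu}$ as in (1). Because both $\{h_\lambda \mid \lambda \in P_+\}$ and $\{\mathrm{m}_\lambda \mid \lambda \in P_+\}$ are $\mathbb{Z}$-bases of $\mathbb{G}$ (the former is recalled just after (\ref{dech}), the latter just before the definition of $M_\mu$), the family $(a_{\lambda,\mu})_{\lambda \in P_+}$ is uniquely determined by $H_\mu$ and, independently, by $M_\mu$. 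Hence $H_\mu = H_\nu$ amounts to $a_{\lambda,\mu} = a_{\lambda,\nu}$ for every $\lambda \in P_+$, which in turn is equivalent to $M_\mu = M_\nu$.

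The argument is essentially bookkeeping and there is no real obstacle; the only point deserving a moment of attention is that the weight $\mu+\overline{\rho}-\overline{w}(\overline{\rho})$ may land on a wall of a Weyl chamber of $\mathfrak{g}$, but this is harmless since both $\mathrm{m}_\gamma$ and $h_\gamma$ are constant along $W$-orbits regardless of stabilizers. A slightly more conceptual reformulation, which I would mention as a remark, is that the $\mathbb{Z}$-linear automorphism $\Phi : \mathbb{G} \to \mathbb{G}$ sending $h_\lambda$ to $\mathrm{m}_\lambda$ for every $\lambda \in P_+$ satisfies $\Phi(H_\mu) = M_\mu$, from which the equivalence follows at once.
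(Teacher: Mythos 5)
Your proof is correct and follows essentially the same route as the paper: the paper's proof of assertion (1) invokes $\mathrm{m}_{w(\gamma)}=\mathrm{m}_\gamma$ and for (2) appeals to Proposition~\ref{Prop_Triang} to observe that the $a_{\lambda,\mu}$ are exactly the coefficients of $H_\mu$ in the $h$-basis, which is precisely your regrouping argument and the linear independence of $\{h_\lambda\}$ and $\{\mathrm{m}_\lambda\}$. Your closing remark about the linear map $\Phi$ sending $h_\lambda\mapsto\mathrm{m}_\lambda$ is a clean reformulation of the same idea (with the small caveat that $\{\mathrm{m}_\lambda\}$ is a $\mathbb{Q}$-basis rather than a $\mathbb{Z}$-basis, since $\mathrm{m}_\lambda=|W_\lambda|\,m_\lambda$; but linear independence over $\mathbb{Z}$ is all the argument needs).
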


\begin{proof}
Assertion 1 follows from the identity $\mathrm{m}_{w(\gamma)}=\mathrm{m}%
_{\gamma}$ for any $\gamma\in P$ and any $w\in W$.\ By Proposition \ref%
{Prop_Triang}, the coefficients of the expansion of $M_{\mu}$ on the basis $%
\{\mathrm{m}_{\lambda}\mid\lambda\in P_{+}\}$ are the same as those
appearing in the expansion of $H_{\mu}$ on the basis $\{h_{\lambda}\mid%
\lambda\in P_{+}^{w}\}$.\ Assertion 2 follows.
\end{proof}

\subsection{A simple expression for the functions $M_{\protect\lambda}$}

For any $\gamma\in P$, set 
\begin{equation*}
\overline{a}_{\gamma}=\sum_{\overline{w}\in\overline{W}}\varepsilon (%
\overline{w})e^{\overline{w}(\gamma)}. 
\end{equation*}
We thus have $\overline{a}_{\overline{w}(\gamma)}=\varepsilon(\overline {w})%
\overline{a}_{\gamma}$ and $\overline{w}(\overline{a}_{\gamma })=\varepsilon(%
\overline{w})\overline{a}_{\gamma}$ for any $\overline{w}\in\overline{W}$
and $\overline{a}_{\overline{w}_{0}(\overline{\rho})}=\varepsilon(\overline{w%
}_{0})\overline{a}_{\overline{\rho}}$ where $\overline{w}_{0}$ is the
element of maximal length in $\overline{W}$. 


\begin{Prop}
Let $\mu\in\overline{P}_{+}$.
\begin{enumerate}
\item We have 
\begin{equation*}
M_{\mu}=\varepsilon(\overline{w}_{0})\sum_{u\in U}u(\overline{a}_{\mu+%
\overline{\rho}}\cdot\overline{a}_{\overline{\rho}}).   \label{Mmu}
\end{equation*}

\item Let $\Lambda$ be the unique element lying in $\{u(\mu+2\overline{\rho }%
)\mid u\in U\}\cap P_{+}$. Then we have 
\begin{equation*}
M_{\mu}=\varepsilon(\overline{w}_{0})e^{\Lambda}+\sum_{\gamma\in
P,\gamma<\Lambda}b_{\lambda,\mu}e^{\gamma}. 
\end{equation*}
\end{enumerate}
\end{Prop}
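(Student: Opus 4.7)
For part (1), I would start from the definition $M_\mu = \sum_{\overline{w}\in\overline{W}} \varepsilon(\overline{w}) \mathrm{m}_{\mu+\overline{\rho}-\overline{w}(\overline{\rho})}$, expand each $\mathrm{m}_\gamma = \sum_{w\in W} e^{w(\gamma)}$, and apply the unique factorization $w = u\overline{v}$ with $u \in U$, $\overline{v} \in \overline{W}$ from Proposition~\ref{Lem-U1}(3). After the reindexing $\overline{w}':=\overline{v}\overline{w}$, the triple sum factors as
\[
\sum_{u\in U} u\bigl(\overline{a}_{\mu+\overline{\rho}}\bigr)\cdot u\Bigl(\sum_{\overline{w}'\in\overline{W}} \varepsilon(\overline{w}')\, e^{-\overline{w}'(\overline{\rho})}\Bigr).
\]
Using the fact (standard for semisimple $\overline{\mathfrak{g}}$) that $\overline{w}_0(\overline{\rho}) = -\overline{\rho}$ and the substitution $\overline{v}:=\overline{w}'\overline{w}_0$, the inner sum collapses to $\varepsilon(\overline{w}_0)\,\overline{a}_{\overline{\rho}}$, and pulling out the global sign gives the claimed identity.

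For part (2), I would analyse the product $\overline{a}_{\mu+\overline{\rho}}\cdot\overline{a}_{\overline{\rho}} = \sum_{\overline{w}_1,\overline{w}_2} \varepsilon(\overline{w}_1)\varepsilon(\overline{w}_2)\, e^{\overline{w}_1(\mu+\overline{\rho})+\overline{w}_2(\overline{\rho})}$. Since both $\mu+\overline{\rho}$ and $\overline{\rho}$ are strictly dominant for $\overline{\mathfrak{g}}$, the inequality $\overline{w}_i(\lambda)\preceq\lambda$ is strict unless $\overline{w}_i=1$; hence the unique $\preceq$-maximum exponent is $\mu+2\overline{\rho}$, attained only at $(\overline{w}_1,\overline{w}_2)=(1,1)$ with coefficient $1$. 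Applying $u\in U$ and invoking Lemma~\ref{LemU3}, which promotes $\preceq$-inequalities to $\leq$-inequalities, the top exponent of $u(\overline{a}_{\mu+\overline{\rho}}\cdot\overline{a}_{\overline{\rho}})$ for the order $\leq$ is $u(\mu+2\overline{\rho})$. Since the dominant representative of a $W$-orbit is its $\leq$-maximum, summing over $u\in U$ identifies $\Lambda$ as the leading exponent of $M_\mu$.

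To pin down the coefficient of $e^\Lambda$, I would determine which triples $(u,\overline{w}_1,\overline{w}_2)$ satisfy $u(\overline{w}_1(\mu+\overline{\rho})+\overline{w}_2(\overline{\rho})) = \Lambda$. Equating squared norms of the two sides (both lie in the $W$-orbit of $\mu+2\overline{\rho}$) yields $(\mu+\overline{\rho},\,\overline{\rho}-\overline{w}_1^{-1}\overline{w}_2(\overline{\rho}))=0$, and strict $\overline{\mathfrak{g}}$-dominance of $\mu+\overline{\rho}$, together with the fact that $\overline{\rho}-\overline{v}(\overline{\rho})$ is a non-negative combination of roots in $\overline{R}_+$, forces $\overline{w}_1 = \overline{w}_2=:\overline{w}$. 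The remaining condition $u\overline{w}(\mu+2\overline{\rho})=\Lambda$ combined with the uniqueness of the $W=U\overline{W}$ decomposition isolates the contributing pair(s); each contributes $\varepsilon(\overline{w})^2 = 1$, so after the prefactor $\varepsilon(\overline{w}_0)$ the coefficient of $e^\Lambda$ is $\varepsilon(\overline{w}_0)$. The main technical step is the norm argument forcing $\overline{w}_1=\overline{w}_2$; everything else is bookkeeping using the compatibility between $\preceq$, $\leq$ and the action of $U$ established in Section~\ref{section3}.
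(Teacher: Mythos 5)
Your argument for part (1) is essentially the paper's: both rest on the unique factorisation $W = U\overline{W}$ of Proposition~\ref{Lem-U1}(3) and the $\overline{W}$-skew-symmetry of $\overline{a}_{\overline{\rho}}$. Reindexing inside the triple sum (your route) and pulling the $\overline{W}$-sum inside the $U$-sum together with $\overline{w}(\overline{a}_{\overline{\rho}}) = \varepsilon(\overline{w})\overline{a}_{\overline{\rho}}$ (the paper's route) are the same manipulation. For part (2) the identification of $\Lambda$ as the $\leq$-maximal exponent also matches the paper: $e^{\mu+2\overline{\rho}}$ is the unique $\preceq$-maximal monomial of $\overline{a}_{\mu+\overline{\rho}}\cdot\overline{a}_{\overline{\rho}}$ by strict $\overline{\mathfrak{g}}$-dominance of $\mu+\overline{\rho}$ and $\overline{\rho}$, and Lemma~\ref{LemU3} promotes $\preceq$ to $\leq$ after applying $u\in U$. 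Your norm computation forcing $\overline{w}_1=\overline{w}_2$ is a longer detour to the same local fact; the paper simply reads it off from the top terms of the two factors.

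The gap is in your final step. Once $\overline{w}_1=\overline{w}_2=\overline{w}$ is forced, the triples contributing $e^{\Lambda}$ correspond, through the unique $U\overline{W}$-factorisation, to the elements $w\in W$ with $w(\mu+2\overline{\rho})=\Lambda$, and their number is $|W_{\mu+2\overline{\rho}}|$, which need not be $1$: regularity of $\mu+2\overline{\rho}$ with respect to $\overline{W}$ does not imply regularity with respect to $W$. Concretely, take $\mathfrak{g}=\mathfrak{gl}_{3}$, $\overline{\mathfrak{g}}=\mathfrak{gl}_{1}\oplus\mathfrak{gl}_{2}$ and $\mu=(2,1,1)$: then $\mu+2\overline{\rho}=(2,2,0)=\Lambda$, both $1$ and $s_{12}$ lie in $U$ and fix $(2,2,0)$, and a direct computation gives $M_{\mu}=-2e^{(2,2,0)}+2e^{(2,1,1)}+\cdots$, so the leading coefficient is $2\varepsilon(\overline{w}_{0})$, not $\varepsilon(\overline{w}_{0})$. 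Thus ``the uniqueness of the $W=U\overline{W}$ decomposition isolates the contributing pair(s)'' does not actually yield a single pair, and the claimed coefficient does not follow. To be fair, the paper's own proof makes the same tacit leap, and only the location of the leading exponent $\Lambda$ --- which your argument and the paper's both establish correctly --- is used downstream in Proposition~\ref{Prop_mu+2ro}; the coefficient in full generality is $|W_{\mu+2\overline{\rho}}|\cdot\varepsilon(\overline{w}_{0})$.
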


\begin{proof}
We prove (1). We have 
\begin{equation*}
M_{\mu}=\sum_{\overline{w}\in\overline{W}}\varepsilon(\overline{w})\mathrm{m}%
_{\mu+\overline{\rho}-\overline{w}(\overline{\rho})}=\sum_{w\in W}w\left(
e^{\mu+\overline{\rho}}\sum_{\overline{w}\in\overline{W}}\varepsilon(%
\overline{w})e^{-\overline{w}(\overline{\rho})}\right) . 
\end{equation*}
This gives 
\begin{equation*}
M_{\mu}=\sum_{w\in W}w\left( e^{\mu+\overline{\rho}}\overline{a}_{-\overline{%
\rho}}\right) =\varepsilon(\overline{w}_{0})\sum_{w\in W}w\left( e^{\mu+%
\overline{\rho}}\overline{a}_{\overline{\rho}}\right) =\varepsilon(\overline{%
w}_{0})\sum_{u\in U}u\left( \sum_{\overline{w}\in\overline{W}}\overline{w}%
\left( e^{\mu+\overline{\rho}}\overline {a}_{\overline{\rho}}\right) \right) 
\end{equation*}
by using Assertion 3 of Proposition \ref{Lem-U1}. Hence%
\begin{align*}
M_{\mu} & =\varepsilon(\overline{w}_{0})\sum_{u\in U}u\left( \sum _{%
\overline{w}\in\overline{W}}e^{\overline{w}(\mu+\overline{\rho})}\overline{w}%
(\overline{a}_{\overline{\rho}})\right) \\
& =\varepsilon(\overline{w}_{0})\sum_{u\in U}u\left( \overline{a}_{\overline{%
\rho}}\sum_{\overline{w}\in\overline{W}}\varepsilon(\overline {w})e^{%
\overline{w}(\mu+\overline{\rho})}\right) \\
& =\varepsilon(\overline{w}_{0})\sum_{u\in U}u(\overline{a}_{\mu +\overline{%
\rho}}\cdot\overline{a}_{\overline{\rho}})
\end{align*}
since $\overline{a}_{\overline{w}(\overline{\rho})}=\varepsilon(\overline {w}%
)\overline{a}_{\overline{\rho}}.$\newline

We prove (2). The monomials $e^{\mu +\overline{\rho }}$ and $e^{\overline{%
\rho }}$ are the monomials of highest weight (with respect to~$\leq _{%
\overline{R}_{+}}$) appearing in the expression of $\overline{a}_{\mu +%
\overline{\rho }}$ and $\overline{a}_{\overline{\rho }}$ respectively. It
follows that the monomial $e^{\mu +2\overline{\rho }}$ is of highest weight
among those appearing in $\overline{a}_{\mu +\overline{\rho }}\cdot 
\overline{a}_{\overline{\rho }}$. Thus using (1) we get an expression of the
form 
\begin{equation*}
M_{\mu }=\varepsilon (\overline{w}_{0})\sum_{u\in U}u\left( e^{\mu +2%
\overline{\rho }}+\sum_{\nu <_{\overline{R}_{+}}\mu +2\overline{\rho }}%
\mathbb{Z}e^{\nu }\right) .
\end{equation*}%
By Lemma \ref{LemU3}, $\nu <_{\overline{R}_{+}}\mu +2\overline{\rho }$
implies that $u(\nu )<u(\mu +2\overline{\rho })$. Finally, the maximal
weight with respect to $\leq $ in the set $\{u(\mu +2\overline{\rho })\mid
u\in U\}$ is the unique element $\Lambda $ lying in $\{u(\mu +2\overline{%
\rho })\mid u\in U\}\cap P_{+}$. Therefore we have 
\begin{equation*}
M_{\mu }=\varepsilon (\overline{w}_{0})e^{\Lambda }+\sum_{\gamma \in
P,\gamma <\Lambda }b_{\lambda ,\mu }e^{\gamma }
\end{equation*}%
as required.
\end{proof}

\subsection{Proof of the conjecture for $\protect\mu+2\overline{\protect\rho}
$ dominant}

\begin{Lem}
Let $\mu\in\overline{P}_{+}$ be such that $\mu+2\overline{\rho}$ belongs to $%
P_{+}$. Then $\mu\in P_{+}.$
\end{Lem}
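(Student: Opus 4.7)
The statement asks us to upgrade a dominance condition on $\mu+2\overline{\rho}$ (with respect to $\mathfrak{g}$) to a dominance condition on $\mu$ itself, given only that $\mu$ is dominant for the Levi $\overline{\mathfrak{g}}$. My plan is to check the inequality $(\mu,\alpha^\vee)\geq 0$ separately for $\alpha\in\overline{S}$ and for $\alpha\in S\setminus\overline{S}$, since $\mu+2\overline{\rho}\in P_+$ is equivalent to $(\mu+2\overline{\rho},\alpha^\vee)\geq 0$ for all $\alpha\in S$, and $P_+$ is cut out by simple roots.

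For a simple root $\alpha\in\overline{S}$ the desired inequality $(\mu,\alpha^\vee)\geq 0$ is immediate from the hypothesis $\mu\in\overline{P}_+$, so this case requires no argument. The substance is in the case $\alpha\in S\setminus\overline{S}$. Here I would rewrite $(\mu,\alpha^\vee)=(\mu+2\overline{\rho},\alpha^\vee)-2(\overline{\rho},\alpha^\vee)$, which together with the hypothesis reduces the problem to showing $(\overline{\rho},\alpha^\vee)\leq 0$ for every $\alpha\in S\setminus\overline{S}$.

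To establish this last inequality, I would expand $\overline{\rho}=\tfrac12\sum_{\beta\in\overline{R}_+}\beta$ and use the fact that each $\beta\in\overline{R}_+$ is a non-negative integer combination of simple roots from $\overline{S}$ (because $\overline{R}_+\subset\overline{E}\cap R_+$, as recalled after the proof of Proposition~\ref{Lem-U1}). Since $\alpha\in S\setminus\overline{S}$, the pairing $(\gamma,\alpha^\vee)\leq 0$ for each $\gamma\in\overline{S}$ (distinct simple roots of $\mathfrak{g}$ always pair non-positively), so $(\beta,\alpha^\vee)\leq 0$ for every $\beta\in\overline{R}_+$. Summing gives $(\overline{\rho},\alpha^\vee)\leq 0$, as needed.

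I do not anticipate any real obstacle: the argument is a short computation using only the definition of $\overline{\rho}$ and the standard non-positivity of pairings between distinct simple roots. The only subtle point, which is already guaranteed in the paper's setup, is that $\overline{S}\subset S$, which forces $\overline{R}_+$ to consist of genuine positive combinations of simple roots of $\mathfrak{g}$ lying in $\overline{S}$, making the non-positivity argument go through uniformly.
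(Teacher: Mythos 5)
Your proof is correct and follows essentially the same route as the paper's: split the simple roots of $\mathfrak{g}$ into $\overline{S}$ and $S\setminus\overline{S}$, use $\mu\in\overline{P}_+$ for the first, and for the second use that $2\overline{\rho}$ is a nonnegative combination of simple roots in $\overline{S}$, each of which pairs non-positively with any $\alpha\in S\setminus\overline{S}$.
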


\begin{proof}
For any simple root $\alpha_{i}\in S$, we have $(\mu+2\overline{\rho}%
,\alpha_{i}^{\vee})\geq0$ since $\mu+2\overline{\rho}\in P_{+}$.\ Also for
any simple root $\alpha_{i}\in\overline{S}$, we have $(\mu,\alpha_{i}^{%
\vee})\geq0$ since $\mu\in\overline{P}_{+}$.\ Now consider $\alpha_{j}\in
S\setminus\overline{S}$.\ Since $2\overline{\rho}$ decomposes as a sum of
simple roots in $\overline{S}$, we must have $(2\overline{\rho},\alpha
_{j}^{\vee})\leq0$. Indeed for any $\alpha_{i}\in\overline{S}$, $(\alpha
_{i},\alpha_{j}^{\vee})=0$ or is negative as it can be easily seen by
considering the the entries of the Cartan matrix of $\mathfrak{g}$ which do
not appear on the diagonal. Therefore $(\mu,\alpha_{j}^{\vee})\geq (\mu+2%
\overline{\rho},\alpha_{j}^{\vee})\geq0$.
\end{proof}

\begin{Prop}
\label{Prop_mu+2ro} Let $\mu,\nu\in\overline{P}_{+}$ be such that $H_{\mu
}=H_{\nu}$ and assume that $\mu+2\overline{\rho}\in P_{+}$. Then, there
exists $v\in U$ such that $\nu=v(\mu)$ and $v(\overline{R}_{+})=\overline{R}%
_{+}$.
\end{Prop}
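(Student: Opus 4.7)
The plan is to combine Proposition~\ref{Prop_memeorbit}, which supplies a Weyl-group element conjugating $\nu$ to $\mu$, with the leading-term description of $M_{\mu}$ made available by the hypothesis $\mu+2\overline{\rho}\in P_{+}$. The first ingredient will yield an element $u\in U$ satisfying $u(\nu)=\mu$, and the second will then force this same $u$ to preserve $\overline{R}_{+}$.

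First, by the lemma just stated, $\mu\in P_{+}$. Since $H_{\mu}=H_{\nu}$, Proposition~\ref{Prop_memeorbit} produces some $\tau\in W$ with $\tau(\nu)=\mu$. Using the decomposition $W=\bigsqcup_{u\in U}u\overline{W}$ from Proposition~\ref{Lem-U1}(3), write $\tau=u\overline{w}$ with $u\in U$ and $\overline{w}\in\overline{W}$, so that $\overline{w}(\nu)=u^{-1}(\mu)$. By Proposition~\ref{Lem-U1}(1), $u^{-1}(\mu)\in u^{-1}(P_{+})\subset\overline{P}_{+}$; since $\overline{w}(\nu)$ also belongs to the $\overline{W}$-orbit of the dominant weight $\nu\in\overline{P}_{+}$, the uniqueness of the dominant representative of an $\overline{W}$-orbit forces $\overline{w}(\nu)=\nu$. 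Hence $u(\nu)=\mu$ with $u\in U$.

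Next I exploit the leading term. By Lemma~\ref{Lemme_M}(2), the hypothesis $H_{\mu}=H_{\nu}$ is equivalent to $M_{\mu}=M_{\nu}$, and since $\mu+2\overline{\rho}\in P_{+}$, the previous proposition identifies $\Lambda_{\mu}=\mu+2\overline{\rho}$ as the $\leq$-maximal weight appearing in $M_{\mu}$. Comparing leading terms then gives $\Lambda_{\nu}=\mu+2\overline{\rho}$ as well, which places $\mu+2\overline{\rho}$ in $W\cdot(\nu+2\overline{\rho})$ and yields $W\cdot(\nu+2\overline{\rho})=W\cdot(\mu+2\overline{\rho})$. Applying the $u$ from the previous paragraph to $\nu+2\overline{\rho}$ gives
\[
u(\nu+2\overline{\rho})=\mu+2u(\overline{\rho})\in W\cdot(\mu+2\overline{\rho}).
\]
Since $\mu+2\overline{\rho}$ is the unique $\leq$-maximal element of its $W$-orbit, we deduce $\mu+2u(\overline{\rho})\leq\mu+2\overline{\rho}$; a strict inequality would force $u(\overline{\rho})<\overline{\rho}$, contradicting Lemma~\ref{LemU5}. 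Hence $u(\overline{\rho})=\overline{\rho}$, so $u(\overline{R}_{+})=\overline{R}_{+}$ by Lemma~\ref{LemU2}, and $v:=u^{-1}\in U$ satisfies both $v(\overline{R}_{+})=\overline{R}_{+}$ and $v(\mu)=\nu$.

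The main obstacle I anticipate is to pin down not merely some conjugating element but one that lies in $U$ and preserves $\overline{R}_{+}$: an arbitrary $\tau\in W$ delivered by Proposition~\ref{Prop_memeorbit} provides neither property automatically. Proposition~\ref{Lem-U1}(3) combined with the $\overline{W}$-dominance of $\nu$ supplies a canonical $u\in U$ with $u(\nu)=\mu$, and Lemma~\ref{LemU5} then plays the role of a rigidity statement, converting the orbit inequality $\mu+2u(\overline{\rho})\leq\mu+2\overline{\rho}$ into the desired equality $u(\overline{\rho})=\overline{\rho}$.
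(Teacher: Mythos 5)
Your proof is correct and takes essentially the same route as the paper's: obtain an element of $U$ conjugating $\nu$ to $\mu$, compare the $\leq$-maximal terms $\Lambda_\mu = \Lambda_\nu = \mu + 2\overline{\rho}$ of $M_\mu = M_\nu$, and then use Lemma~\ref{LemU5} and Lemma~\ref{LemU2} to upgrade the resulting inequality $u(\overline{\rho}) \leq \overline{\rho}$ to $u(\overline{R}_+) = \overline{R}_+$. Two small points where your version is slightly cleaner: you derive $u\in U$ with $u(\nu)=\mu$ explicitly from the decomposition $W=\bigsqcup_{u\in U}u\overline{W}$ and the uniqueness of the $\overline{W}$-dominant representative of the orbit of $\nu$, rather than citing ``the proof of Proposition~\ref{Prop_memeorbit}''; and you reach $\mu+2u(\overline{\rho})\leq\mu+2\overline{\rho}$ directly from $u(\nu+2\overline{\rho})\in W\cdot(\mu+2\overline{\rho})$, dispensing with the paper's auxiliary element $u'\in U$ satisfying $u'(\nu+2\overline{\rho})=\mu+2\overline{\rho}$.
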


\begin{proof}
By the previous lemma, we see that $\mu\in P_{+}$. Let $v\in U$ be such that 
$\nu\in P_{+}^{v}$. Then by (the proof of) Proposition \ref{Prop_memeorbit},
we know that $v(\nu)=\mu$. Next Lemma \ref{Lemme_M} implies that $M_{\mu
}=M_{\nu}$ and, in particular, $M_{\mu}$ and $M_{\nu} $ have the same
maximal monomial with respect to $<$.\ Hence%
\begin{equation*}
\{u(\mu+2\overline{\rho})\mid u\in U\}\cap P_{+}=\{u(v^{-1}(\mu)+2\overline {%
\rho})\mid u\in U\}\cap P_{+}. 
\end{equation*}
But $\mu+2\overline{\rho}\in P_{+}$ so we have $\{u(v^{-1}(\mu)+2\overline {%
\rho})\mid u\in U\}\cap P_{+}=\{\mu+2\overline{\rho}\}$. Hence, there exists 
$u\in U$ such that $u(v^{-1}(\mu)+2\overline{\rho})=\mu+2\overline{\rho}$.
We have 
\begin{equation*}
\begin{array}{c}
\mu+2\overline{\rho}=u(v^{-1}(\mu)+2\overline{\rho}) \\ 
\Updownarrow \\ 
u^{-1}(\mu+2\overline{\rho})=v^{-1}(\mu)+2\overline{\rho} \\ 
\Updownarrow \\ 
vu^{-1}(\mu+2\overline{\rho})=\mu+2v(\overline{\rho}) \\ 
\Updownarrow \\ 
vu^{-1}(\mu+2\overline{\rho})-(\mu+2\overline{\rho})=2(v(\overline{\rho })-%
\overline{\rho}).%
\end{array}
\end{equation*}
Since $\mu+2\overline{\rho}\in P_{+}$, we have $vu^{-1}(\mu+2\overline{\rho }%
)-(\mu+2\overline{\rho})\leq0$. Hence $v(\overline{\rho})\leq\overline{\rho}$%
. By Lemma \ref{LemU5}, this implies that $v(\overline{\rho})=\overline{\rho 
}$. Finally by Lemma \ref{LemU2}, we have $v(\overline{R}_{+})=\overline {R}%
_{+}$.
\end{proof}

\begin{Rem}
We will see in the next section (Remark \ref{rem_Ce}) that we can have $\mu$
and $\nu$ in the same $W$-orbit, $\mu+2\overline{\rho}$ and $\nu +2\overline{%
\rho}$ in the same $W$-orbit but $H_{\mu}\neq H_{\nu}$.\ So the hypothesis $%
\mu+2\overline{\rho}\in P_{+}$ is crucial in the above proposition.
\end{Rem}

\section{The classical Lie algebras}

\subsection{Proof of the conjecture for $\mathfrak{gl}_{n}$}

\label{Sec_class}We now prove our conjecture in type $A$. We shall work in
fact with $\mathfrak{gl}_{n}$ rather than $\mathfrak{sl}_{n}$. The main tool
is a duality result between the branching coefficients $m_{\mu}^{\lambda}$
and some generalized Littlewood Richardson coefficients together with the
main result of \cite{Raj}. Each partition $\lambda=(\lambda_{1}\geq\cdots
\geq\lambda_{d}\geq0)$ can be regarded as a dominant weight of $\mathfrak{gl}%
_{n}$ by adding $n-d$ coordinates equal to $0$. We will use this convention
in this section.\ For any partition $\mu=(\mu_{1}\geq\cdots\geq\mu_{d}),$ we
have in fact%
\begin{equation}
s_{\mu}=\sum_{\lambda=(\lambda_{1}\geq\cdots\geq\lambda_{d}\geq0)}K_{%
\lambda,\mu}^{-1}h_{\lambda}   \label{JT}
\end{equation}
that is, the coefficients appearing in the expansion of $s_{\mu}$ on the $h$%
-basis are inverse Kostka numbers indexed by pairs $(\lambda,\mu)$ of
partitions with at most $d$ nonzero parts. When $\mathfrak{g}=\mathfrak{gl}%
_{n}$, the $h$-functions have also an additional property (which does not
hold for the other root systems).\ Consider $\beta=(\beta_{1},\ldots,%
\beta_{n})\in\mathbb{Z}_{\geq0}^{n}$, then $h_{\beta}=h_{\beta_{1}}\times%
\cdots\times h_{\beta_{n}}$.

Recall that the dominant weights of $\mathfrak{gl}_{n}$ can be regarded as
non increasing sequences of integers (possibly negative) with length $n$. We
will realise $\overline{\mathfrak{g}}=\mathfrak{gl}_{m_{1}}\oplus \cdots
\oplus \mathfrak{gl}_{m_{r}}$ as the subalgebra of $\mathfrak{gl}_{m}$ of
block matrices with block sizes $m_{1},\ldots ,m_{r}$.\ Now consider $\mu
\in \overline{P}$ such that $\mu =\mu ^{(1)}+\cdots +\mu ^{(r)}$ where $\mu
^{(k)}\in P_{+}^{(k)}$ as in \S\ \ref{subsec_Dec}. Then each $\mu ^{(k)}$ is
a non increasing sequence of integers of length $m_{k}$. We will assume
temporary that the coordinates of $\mu $ are nonnegative so that each $\mu
^{(k)}$ is a partition with $m_{k}$ parts.\ We then have according to (\ref%
{H-product})%
\begin{equation*}
H_{\mu }=\sum_{\lambda ^{(1)}\in P_{+}^{(1)}}\cdots \sum_{\lambda ^{(r)}\in
P_{+}^{(r)}}K_{\lambda ^{(1)},\mu ^{(1)}}^{-1}\cdots K_{\lambda ^{(r)},\mu
^{(r)}}^{-1}h_{\lambda ^{(1)}+\cdots +\lambda ^{(r)}}
\end{equation*}%
where each $\lambda ^{(k)}$ is a partition.\ In particular, we have $%
h_{\lambda ^{(1)}+\cdots +\lambda ^{(r)}}=h_{\lambda ^{(1)}}\times \cdots
\times h_{\lambda ^{(r)}}$ which yields%
\begin{equation*}
H_{\mu }=\prod_{i=1}^{k}\left( \sum_{\lambda ^{(k)}\in
P_{+}^{(k)}}K_{\lambda ^{(k)},\mu ^{(k)}}^{-1}h_{\lambda ^{(k)}}\right) .
\end{equation*}%
Finally by using (\ref{JT}), we obtain%
\begin{equation*}
H_{\mu }=\prod_{i=1}^{k}s_{\mu ^{(k)}}.
\end{equation*}%
We can now prove our conjecture for induced representations of $\mathfrak{gl}%
_{n}$

\begin{Prop}
Let $\mu$ and $\nu$ be dominant weights of $\overline{\mathfrak{g}}$.
Assume $H_{\mu}=H_{\nu}$. Then, there exists a permuation $\sigma$ of $%
\{1,\ldots,n\}$ such that $\sigma(\overline{R}_{+})=\overline{R}_{+}$.
\end{Prop}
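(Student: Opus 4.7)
The plan is to deduce the statement from Rajan's theorem on tensor product multiplicities~\cite{Raj}, via the product formula
\begin{equation*}
H_\mu = \prod_{k=1}^{r} s_{\mu^{(k)}}
\end{equation*}
just established in this section. Through this formula, $H_\mu$ is realised as the $\mathfrak{gl}_{n}$-character of the tensor product $V(\mu^{(1)}) \otimes \cdots \otimes V(\mu^{(r)})$ of irreducibles.

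First I would make the temporary reduction (as the text already performs) that every coordinate of $\mu$ and $\nu$ is nonnegative, so that each $\mu^{(k)}$ and each $\nu^{(k)}$ is a genuine partition with at most $m_k$ parts. The general case is recovered by twisting the inducing modules by a sufficiently high power of the determinant of $\mathfrak{gl}_{n}$: this adds a common integer to every coordinate of $\mu$ and $\nu$, multiplies $H_\mu$ and $H_\nu$ by the same monomial, and leaves both the hypothesis $H_\mu = H_\nu$ and the conclusion unaffected.

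Under this reduction, the hypothesis $H_\mu = H_\nu$ becomes the identity
\begin{equation*}
s_{\mu^{(1)}} \cdots s_{\mu^{(r)}} = s_{\nu^{(1)}} \cdots s_{\nu^{(r)}}
\end{equation*}
in the character ring of $\mathfrak{gl}_{n}$, i.e.\ an isomorphism of the corresponding tensor products of irreducible $\mathfrak{gl}_{n}$-modules. The main result of \cite{Raj} then yields a bijection $\pi$ of $\{1,\ldots,r\}$ such that $\mu^{(k)} = \nu^{(\pi(k))}$ as $\mathfrak{gl}_{n}$-dominant weights for every $k$.

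Finally I would promote $\pi$ to a permutation $\sigma$ of $\{1,\ldots,n\}$ by sending the $k$-th block of coordinates onto the $\pi(k)$-th through the unique order-preserving bijection between them. Such a $\sigma$ satisfies $\sigma(R_{+}^{(k)}) = R_{+}^{(\pi(k))}$ for every $k$, hence $\sigma(\overline{R}_{+}) = \overline{R}_{+}$, and by construction it sends $\nu$ to $\mu$ coordinate by coordinate. The delicate point is to ensure that the bijection $\pi$ given by \cite{Raj} can be chosen compatibly with the block sizes, i.e.\ $m_k = m_{\pi(k)}$ for every $k$. I expect to resolve this by combining the multiset equality from \cite{Raj} with Proposition~\ref{Prop_memeorbit}, which already forces $\mu$ and $\nu$ to be $W$-conjugate and therefore pins down the multiset of pairs $(m_k, \mu^{(k)})$ on each side.
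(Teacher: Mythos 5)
Your plan follows the same route as the paper (product formula for the induced character, Rajan's theorem, lifting the block permutation to a permutation of $\{1,\ldots,n\}$), and the reduction by tensoring with a power of the determinant is equivalent to the paper's shift by $a\delta$. However, there is a genuine gap precisely at the point you flag as ``delicate'', and your proposed fix does not close it. You reduce only to the case where all coordinates of $\mu$ and $\nu$ are nonnegative. But then some $\mu^{(k)}$ may have trailing zero parts, so the partition $\mu^{(k)}$ no longer determines the block size $m_k$. Rajan's theorem, applied to $s_{\mu^{(1)}}\cdots s_{\mu^{(r)}}=s_{\nu^{(1)}}\cdots s_{\nu^{(r)}}$, yields a bijection $\pi$ of $\{1,\ldots,r\}$ matching the two multisets of \emph{partitions}; nothing forces $m_k=m_{\pi(k)}$. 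Your proposed repair via Proposition~\ref{Prop_memeorbit} cannot work: the $W$-conjugacy of $\mu$ and $\nu$ (equality of the multisets of coordinates) is already a consequence of the multiset equality $\{\mu^{(k)}\}=\{\nu^{(k)}\}$ of partitions together with the fixed value of $n$, so it provides no new constraint on the block sizes. Concretely, take $\overline{\mathfrak g}=\mathfrak{gl}_2\oplus\mathfrak{gl}_3\subset\mathfrak{gl}_5$, $\mu=(2,0\mid 1,1,0)$, $\nu=(1,1\mid 2,0,0)$: the partition multisets $\{(2),(1,1)\}$ on both sides coincide and $\mu,\nu$ are $W$-conjugate, yet the unique matching $\pi$ swaps blocks of different sizes, so no Dynkin diagram automorphism in $W$ exists. (Here the hypothesis $H_\mu=H_\nu$ in fact fails, but your argument, which discards the block structure after applying Rajan, cannot detect this.)

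The paper avoids this by shifting until all coordinates of $\mu$ and $\nu$ are \emph{strictly positive}. Then each $\mu^{(k)}+a\delta^{(k)}$ is a partition with exactly $m_k$ strictly positive parts, so (i) Rajan's theorem applies cleanly, and (ii) the bijection $\tau$ automatically preserves the number of parts, hence $m_k=m_{\tau(k)}$ for free. This strict positivity, not mere nonnegativity, is what makes the block-size bookkeeping work. You should strengthen your reduction accordingly; once that is done the rest of your argument (the order-preserving lift of $\pi$ to $\sigma\in S_n$, and $\sigma(\overline{R}_+)=\overline{R}_+$) matches the paper's.
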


\begin{proof}
By Theorem \ref{Th_multi}, we have $m_{\mu}^{\lambda}=\sum_{\sigma\in
S_{n}}\varepsilon(\sigma)\overline{\mathcal{P}}(\sigma(\lambda+\rho)-\mu-%
\rho )$.\ Set $\delta=(1,\ldots,1)\in\mathbb{Z}^{n}$. Since $\delta$ is
fixed by $S_{n}$, we have for any nonnegative integer $a$, $%
m_{\lambda+a\delta}^{\mu+a\delta}=m_{\lambda}^{\mu}$. Observe also that $%
P_{+}$ is invariant by translation by $\delta$.\ Therefore 
\begin{equation*}
H_{\mu+\delta a}=\sum_{\nu\in P_{+}}m_{\nu}^{\mu+a\delta}s_{\nu}=\sum
_{\lambda\in
P_{+}}m_{\lambda+a\delta}^{\mu+a\delta}s_{\lambda+a\delta}=\sum_{\lambda\in
P_{+}}m_{\lambda}^{\mu}s_{\lambda+a\delta}
\end{equation*}
by setting $\nu=\lambda+a\delta$ in the leftmost sum. So $H_{\mu}=H_{\nu}$
if and only if $H_{\mu+a\delta}=H_{\nu+a\delta}$. We can now choose $a$
sufficiently large so that $\mu\in\mathbb{Z}_{>0}^{n}$ and $\nu\in \mathbb{Z}%
_{>0}^{n}$. Decompose $\mu=\mu^{(1)}+\cdots+\mu^{(r)}$ and $\nu
=\nu^{(1)}+\cdots+\nu^{(r)}$ as in \S\ \ref{subsec_Dec}.\ For any $%
k=1,\ldots,r$, set $\delta_{k}=(1,\ldots,1)\in\mathbb{Z}^{m_{k}}$.\ The
similar decompositions of $\mu+a\delta$ and $\nu+a\delta$ verify $(\mu
+a\delta)^{(k)}=\mu^{(k)}+a\delta^{(k)}$ and $(\nu+a\delta)^{(k)}=\nu
^{(k)}+a\delta^{(k)}$ for any $k=1,\ldots,r$.\ We thus obtain%
\begin{equation*}
\prod_{i=1}^{k}s_{\mu^{(k)}+a\delta^{(k)}}=\prod_{i=1}^{k}s_{\nu^{(k)}+a%
\delta^{(k)}}. 
\end{equation*}
Now by the main result of \cite{Raj}, since the partitions $%
\mu^{(k)}+a\delta^{(k)}$ and $\nu^{(k)}+a\delta^{(k)}$ appearing above have
positive parts, we know that the set of partitions $\{\mu^{(k)}+a%
\delta^{(k)},k=1,\ldots r\}$ and $\{\nu^{(k)}+a\delta^{(k)},k=1,\ldots r\}$
coincide. There thus exists a permuation $\tau\in S_{r}$ such that $%
\mu ^{(k)}+a\delta^{(k)}=\nu^{(\tau(k))}+a\delta^{(\tau(k))}$.\ The
permuation $\tau$ preserves the lengths of the partitions so $%
m_{k}=m_{\tau(k)}$ and $\delta^{(k)}=\delta^{(\tau(k))}$ for any $%
k=1,\ldots,r$. We obtain $\mu ^{(k)}=\nu^{(\tau(k))}$.\ For any $%
k=1,\ldots,r,$ set $I_{k}=\{m_{k-1}+1,\ldots,m_{k}\}$ (with $m_{0}=0\}$.\
Then $I_{k}$ and $I_{\tau(k)}$ have the same cardinality because $%
m_{k}=m_{\tau(k)}$. Let $\sigma\in S_{n}$ be such that $%
\sigma(m_{k-1}+j)=m_{\tau(k)-1}+j$ for any $j\in\{1,\ldots,k\}$ and any $%
k\in\{1,\ldots,r\}$. Then $\sigma$ is a Dynkin diagram automorphism of $%
\overline{\mathfrak{g}}$. We have $\sigma(\mu)=\nu$ and $\sigma(\overline {R}%
_{+})=\overline{R}_{+}$ as desired.
\end{proof}

\bigskip

\begin{Rem}
\label{rem_Ce}Observe that we can have $\mu$ and $\nu$ in the same $W$%
-orbit, $\mu+2\overline{\rho}$ and $\nu+2\overline{\rho}$ in the same $W$%
-orbit but $H_{\mu}\neq H_{\nu}$. Consider for example $\overline{\mathfrak{g%
}}=\mathfrak{gl}_{4}\oplus\mathfrak{gl}_{2}$ in $\mathfrak{gl}_{6}$ and $%
\mu=(5,2,2,1\mid4,3)$ and $\nu=(5,4,3,1\mid2,2)$. We have $2\overline{\rho }%
=(3,1,-1,-3\mid1,-1)$ so $\mu+2\overline{\rho}=(8,3,1,-2\mid5,2)$ and $\nu+2%
\overline{\rho}=(8,5,2,-2\mid3,1)$ belong to the same $W$-orbit. By the
previous proposition, we have $H_{\mu}\neq H_{\nu}$. We cannot apply
Proposition \ref{Prop_mu+2ro} since neither $\mu+2\overline{\rho}$ or $\nu+2%
\overline{\rho}$ belongs to $P_{+}$.
\end{Rem}

\subsection{Polarisation}

Assume $\mathfrak{g=so}_{2n+1},\mathfrak{sp}_{2n}$ or $\mathfrak{so}_{2n}$
and $\overline{\mathfrak{g}}=\mathfrak{gl}_{n}$. Each dominant weight $\mu
\in\overline{P}_{+}$ defines a pair of partitions $(\mu_{+},\mu_{-})$ of
length $\leq n$ obtained by ordering decreasingly the positive and negative
coordinates of $\mu$, respectively. Recall also that to each partition $%
\lambda$ of length $\leq n$ corresponds a dominant weight of $P_{+}$. The
branching coefficients $m_{\mu}^{\lambda}$ were obtained by Littelwood (see 
\cite{Li}). They can be expressed in terms of the Littelwood-Richardson
coefficients as follows:%
\begin{equation*}
m_{\mu}^{\lambda}=%
\begin{cases}
\sum_{\gamma,\delta}c_{\mu_{+},\mu_{-}}^{\gamma}c_{\gamma,\delta}^{\lambda}
& \mbox{ for $\mathfrak{g=so}_{2n+1}$}, \\ 
\sum_{\gamma,\delta}c_{\mu_{+},\mu_{-}}^{\gamma}c_{\gamma,2\delta}^{\lambda}
& \mbox{ for $\mathfrak{g=sp}_{2n}$}, \\ 
\sum_{\gamma,\delta}c_{\mu_{+},\mu_{-}}^{\gamma}c_{\gamma,(2\delta)^{%
\ast}}^{\lambda} & \mbox{ for $\mathfrak{g=so}_{2n}$},%
\end{cases}
\end{equation*}
where $\gamma$ and $\delta$ runs over the set of partitions with length $%
\leq n$ and $(2\delta)^{\ast}$ is the conjugate partition of $2\delta$.

\begin{Prop}
Conjecture \ref{conj} is true for $\mathfrak{g=so}_{2n+1},\mathfrak{sp}_{2n}$
or $\mathfrak{so}_{2n}$ and $\overline{\mathfrak{g}}=\mathfrak{gl}_{n}$.
\end{Prop}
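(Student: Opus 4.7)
The plan is to reduce the classical-type case to Rajan's theorem for $\mathfrak{gl}_{n}$ via Littlewood's branching formula. The main vehicle is to ``polarise'' the $\mathfrak{gl}_{n}$ side: rewrite $H_{\mu}$ as a single symmetric functional evaluated on the product $s_{\mu_{+}}\cdot s_{\mu_{-}}$ (in the character ring of $\mathfrak{gl}_{n}$), and extract from $H_{\mu}=H_{\nu}$ the identity $s_{\mu_{+}}s_{\mu_{-}}=s_{\nu_{+}}s_{\nu_{-}}$. Rajan's result then forces $\{\mu_{+},\mu_{-}\}=\{\nu_{+},\nu_{-}\}$ as multisets, and the case analysis will be controlled enough to produce the required $u\in W$.

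Concretely, using the branching formulas of Littlewood stated just above the proposition, I would write for a partition $\lambda$
\[
m_{\mu}^{\lambda}=\sum_{\gamma}c_{\mu_{+},\mu_{-}}^{\gamma}\,B_{\gamma}^{\lambda},\qquad B_{\gamma}^{\lambda}:=\sum_{\delta}c_{\gamma,X(\delta)}^{\lambda},
\]
where $X(\delta)$ stands for $\delta$, $2\delta$ or $(2\delta)^{\ast}$ according to the type. Setting $B_{\gamma}:=\sum_{\lambda}B_{\gamma}^{\lambda}s_{\lambda}$, the contribution $\delta=\emptyset$ supplies $c_{\gamma,\emptyset}^{\lambda}=\delta_{\gamma,\lambda}$, while all other $\delta\neq\emptyset$ contribute only $s_{\lambda}$'s with $|\lambda|>|\gamma|$. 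Hence $B_{\gamma}=s_{\gamma}+\sum_{|\lambda|>|\gamma|}b_{\gamma}^{\lambda}s_{\lambda}$, and this unitriangularity (in the total degree, refined by dominance) forces linear independence of $\{B_{\gamma}\mid\ell(\gamma)\leq n\}$. Comparing $H_{\mu}=\sum_{\gamma}c_{\mu_{+},\mu_{-}}^{\gamma}B_{\gamma}$ with the analogous expression for $\nu$ therefore yields $c_{\mu_{+},\mu_{-}}^{\gamma}=c_{\nu_{+},\nu_{-}}^{\gamma}$ for every partition $\gamma$, which is precisely the equality $s_{\mu_{+}}s_{\mu_{-}}=s_{\nu_{+}}s_{\nu_{-}}$ of characters of $\mathfrak{gl}_{n}$-modules. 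Invoking Rajan's theorem for $\mathfrak{gl}_{n}$ (the case $k=2$, just as in the previous subsection) gives $\{\mu_{+},\mu_{-}\}=\{\nu_{+},\nu_{-}\}$ as multisets of partitions.

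It remains to convert this multiset equality into the statement of the conjecture. Either $\mu_{+}=\nu_{+}$ and $\mu_{-}=\nu_{-}$, in which case $\mu=\nu$ and $u=1$ works; or $\mu_{+}=\nu_{-}$ and $\mu_{-}=\nu_{+}$, in which case $\nu$ is obtained from $\mu$ by the $A_{n-1}$ Dynkin automorphism $(\mu_{1},\ldots,\mu_{n})\mapsto(-\mu_{n},\ldots,-\mu_{1})$. This flip is realised by composing the longest permutation of $S_{n}\subset W$ with $w_{0}=-\mathrm{id}$, and both factors belong to $W(\mathfrak{g})$ in types $B_{n}$ and $C_{n}$, and in type $D_{n}$ when $n$ is even; moreover, the analysis of $u\in W$ with $u(\overline{R}_{+})=\overline{R}_{+}$ (signed permutations that either fix or uniformly negate all coordinates while reversing the order) shows that this flip does preserve $\overline{R}_{+}$.

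The main obstacle is the unitriangular step and, more delicately, the case $\mathfrak{g}=\mathfrak{so}_{2n}$ with $n$ odd: the flipped element lies in $W(D_{n})$ only when $n$ is even, while Littlewood's formula above is symmetric in $(\mu_{+},\mu_{-})$. The argument has to exploit the dominant weights $\lambda$ of $\mathfrak{so}_{2n}$ with $\lambda_{n}<0$, whose branching is governed by the $D_{n}$ diagram automorphism and is \emph{not} symmetric in $(\mu_{+},\mu_{-})$; this breaks the spurious equality $m_{\mu}^{\lambda}=m_{\nu}^{\lambda}$ that would otherwise hold only on partitions, and so forces $\mu=\nu$ whenever the diagram flip fails to lie in $W$. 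Handling this ``other spinor'' side cleanly is where I expect most of the technical work to reside.
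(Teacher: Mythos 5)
Your approach is essentially the same as the paper's: reduce to Rajan's theorem for $\mathfrak{gl}_n$ via Littlewood's branching rules. The paper extracts $c_{\mu_+,\mu_-}^{\lambda}=c_{\nu_+,\nu_-}^{\lambda}$ by comparing coefficients only on the $\lambda$ of minimal size $|\lambda|=|\mu_+|+|\mu_-|$ (first arguing by contradiction, using $\lambda=\mu_++\mu_-$, that this minimal size is the same for $\mu$ and $\nu$); your unitriangularity of the $B_{\gamma}$'s is a slightly more systematic packaging of the same degree argument and yields the same conclusion. Both routes then invoke Rajan with $k=2$ and finish by the same case analysis $\{\mu_+,\mu_-\}=\{\nu_+,\nu_-\}$.

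Where you genuinely add something is in flagging the type $D_n$, $n$ odd difficulty, and here the paper's proof as written actually has a gap that yours is honest about. The paper asserts without qualification that $-\overline{w}_0\in W$ and $-\overline{w}_0(\overline{R}_+)=\overline{R}_+$. The second claim is true, but the first is false in type $D_n$ with $n$ odd: $-\overline{w}_0$ is the composition of the order-reversing permutation with $-\mathrm{id}$, hence has $n$ sign changes and lies in $W(D_n)$ only when $n$ is even. Since the Littlewood formula displayed in the paper (and used by both of you) is manifestly symmetric in $(\mu_+,\mu_-)$ but only controls branching onto \emph{partition} weights $\lambda$, the multiset equality $\{\mu_+,\mu_-\}=\{\nu_+,\nu_-\}$ does \emph{not} by itself imply $H_\mu=H_\nu$ in type $D$: the dominant weights with $\lambda_n<0$ are not covered. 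To close the case $n$ odd one must argue, exactly as you suggest, that equality of $m_\mu^\lambda$ and $m_\nu^\lambda$ on those ``spinorial'' $\lambda$ forces $\mu_+=\nu_+$ and $\mu_-=\nu_-$ whenever $-\overline{w}_0\notin W$; neither you nor the paper actually carries out this step. So your proposal is incomplete in precisely the way the paper's own proof is, but your diagnosis of the missing step is correct and is a substantive observation, not an overcautious worry.
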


\begin{proof}
Consider $\mu$ and $\nu$ in $\overline{P}_{+}$ such that $H_{\mu}=H_{\nu} $.
We have $m_{\mu}^{\lambda}=m_{\nu}^{\lambda}$ for any $\lambda\in P_{+}$.
For any partition $\lambda$, write $\left\vert \lambda\right\vert $ the size
of $\lambda$, that is the sum of its parts. Observe first that $%
m_{\mu}^{\lambda }=0$ when $\left\vert \lambda\right\vert <\left\vert
\mu_{+}\right\vert +\left\vert \mu_{-}\right\vert $.\ Also, when $\left\vert
\lambda\right\vert =\left\vert \mu_{+}\right\vert +\left\vert
\mu_{-}\right\vert $ in the above branching coefficients, we get $%
\delta=\emptyset,\gamma=\lambda$ and $m_{\mu
}^{\lambda}=c_{\mu_{+},\mu_{-}}^{\lambda}$for $\mathfrak{g=so}_{2n+1},%
\mathfrak{sp}_{2n}$ or $\mathfrak{so}_{2n}$.

Assume $\left\vert \mu_{+}\right\vert +\left\vert \mu_{-}\right\vert
<\left\vert \nu_{+}\right\vert +\left\vert \nu_{-}\right\vert $.\ Then for $%
\lambda=\mu_{+}+\mu_{-}$, we have $m_{\mu}^{\lambda}=c_{\mu_{+},\mu_{-}}^{%
\lambda}=1$ whereas $m_{\nu}^{\lambda}=0$ since $\left\vert \lambda
\right\vert =\left\vert \mu_{+}\right\vert +\left\vert \mu_{-}\right\vert
<\left\vert \nu_{+}\right\vert +\left\vert \nu_{-}\right\vert $. So we
obtain a contradiction. Similarly, we cannot have $\left\vert
\mu_{+}\right\vert +\left\vert \mu_{-}\right\vert >\left\vert
\nu_{+}\right\vert +\left\vert \nu_{-}\right\vert $. Therefore $\left\vert
\mu_{+}\right\vert +\left\vert \mu_{-}\right\vert =\left\vert
\nu_{+}\right\vert +\left\vert \nu _{-}\right\vert $. Then for any $\lambda$
such that $\left\vert \lambda \right\vert =\left\vert \mu_{+}\right\vert
+\left\vert \mu_{-}\right\vert =\left\vert \nu_{+}\right\vert +\left\vert
\nu_{-}\right\vert $, we have $c_{\mu_{+},\mu_{-}}^{\lambda}=c_{\nu_{+},%
\nu_{-}}^{\lambda}$.\ By the main result of \cite{Raj}, we obtain the
equality of sets $\{\mu_{+},\mu _{-}\}=\{\nu_{+},\nu_{-}\}$. When $%
\mu_{+}=\nu_{+}$ and $\mu_{-}=\nu_{-}$, we have $\mu=\nu$ and the conjecture
holds. When $\mu_{+}=\nu_{-}$ and $\mu _{-}=\nu_{+},$ we have $\mu=-%
\overline{w}_{0}\nu$ where $\overline{w}_{0}$ is the longest element of $%
\overline{W}$ that is, the permutation of $\{1,\ldots,n\}$ such that $%
w_{0}(k)=n-k+1$. Since $-\overline{w}_{0}\in W$ and $-w_{0}(\overline{R}%
_{+})=\overline{R}_{+}$ we are done.
\end{proof}


\label{SecFinal}We now summarize our results.

\begin{Th}
\label{Th_final}Consider $\mu,\nu\in\overline{P}_{+}$.

\begin{enumerate}
\item When $\mu$ and $\nu$ are conjugate under the action of a Dynkin
diagram automorphism of $\overline{\mathfrak{g}}$ lying in $W$, we have $%
H_{\mu }=H_{\nu}$.

\item Conversely, if we assume $H_{\mu}=H_{\nu},$ then $\mu$ and $\nu$ are
conjugate under the action of a Dynkin diagram automorphism lying in $W$
when one of the following hypotheses is satisfied

\begin{itemize}
\item $\mu$ and $\nu$ belong to the same Weyl chamber of $\mathfrak{g}$ (in
which case $\mu=\nu$),

\item $\mu$ and $\nu$ are far enough of the walls of the Weyl chamber where
they appear (each set $E_{\mu}$ or $E_{\nu}$ is entirely contained in a Weyl
chamber),

\item $\mu+2\overline{\rho}$ or $\nu+2\overline{\rho}$ belongs to $P_{+}$,

\item $\mathfrak{g}=\mathfrak{gl}_{n}$,

\item $\mathfrak{g=so}_{2n+1},\mathfrak{sp}_{2n}$ or $\mathfrak{so}_{2n}$
and $\overline{\mathfrak{g}}=\mathfrak{gl}_{n}$.
\end{itemize}
\end{enumerate}
\end{Th}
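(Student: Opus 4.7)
The plan is to assemble the theorem as a synthesis of the results established in Sections 4--7, with each bullet of part (2) matching one statement already proved. The statement has no content beyond repackaging these, so the proof reduces to a checklist, and the main work is making sure the hypotheses of each cited proposition line up correctly with the bullets.

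For part (1), I would simply invoke Proposition~\ref{Prop_direct}: when $\nu = u(\mu)$ for some $u \in W$ with $u(\overline{R}_{+}) = \overline{R}_{+}$, the Weyl character formula computation carried out there shows $H_\mu = H_\nu$ via the identity $u\overline{W}u^{-1} = \overline{W}$ together with $u(\overline{\rho}) = \overline{\rho}$, and the fact that $h_\beta = h_{w(\beta)}$ for all $w \in W$.

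For part (2), I would address each bullet in turn. For the ``same Weyl chamber'' bullet, Proposition~\ref{Prop_memeorbit} already shows that $H_\mu = H_\nu$ forces $\mu$ and $\nu$ to lie in a common $W$-orbit, and if they also belong to the same closed chamber $\overline{C}$, then the unicity of the $W$-orbit representative in $\overline{C}$ gives $\mu = \nu$ (the identity being a trivial Dynkin diagram automorphism). For the ``far from the walls'' bullet, I would directly quote the corollary following Proposition~\ref{Prop_memeorbit}: the assumption $E_\mu \subset P_+^w$ and $E_\nu \subset P_+^{w'}$ ensures that the sets $E_\mu, E_\nu$ each lie in single Weyl chambers, and the argument given there constructs $\tau \in W$ with $\tau(\nu) = \mu$ and $\tau(\overline{R}_+) = \overline{R}_+$. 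For the ``$\mu + 2\overline{\rho} \in P_+$'' bullet, I would apply Proposition~\ref{Prop_mu+2ro} verbatim (and symmetrically swap $\mu, \nu$ if only $\nu + 2\overline{\rho}$ is dominant, using that $H_\mu = H_\nu$ is symmetric in $\mu$ and $\nu$).

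The last two bullets correspond to the two propositions of Section~7: the type $A$ case is handled via the identity $H_\mu = \prod_k s_{\mu^{(k)}}$ combined with Rajan's theorem and a translation by $a\delta$ to pass into the strictly positive cone, while the $\mathfrak{gl}_n \subset \mathfrak{g}$ case for $\mathfrak{g}$ classical uses Littlewood's formula to read off $\{\mu_+, \mu_-\} = \{\nu_+, \nu_-\}$ from the minimal-size branching coefficients and Rajan's theorem; the conclusion is then either $\mu = \nu$ or $\mu = -\overline{w}_0 \nu$, and $-\overline{w}_0$ is a Dynkin diagram automorphism of $\mathfrak{gl}_n$ inside $W$. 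The only subtlety worth flagging explicitly is the symmetry step mentioned above for the $\mu + 2\overline{\rho}$ bullet, since Proposition~\ref{Prop_mu+2ro} is stated asymmetrically; otherwise the proof is a short aggregation and the ``main obstacle'' is nothing more than bookkeeping the hypotheses.
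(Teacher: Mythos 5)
Your proposal is correct and takes the same approach as the paper, which offers no explicit proof for this theorem beyond the phrase ``we now summarize our results''; the theorem is indeed just an aggregation of Proposition~\ref{Prop_direct}, Proposition~\ref{Prop_memeorbit}, the corollary following it, Proposition~\ref{Prop_mu+2ro}, and the two propositions of Section~7. Your explicit flagging of the symmetry step for the $\mu+2\overline{\rho}$ bullet is a reasonable extra care that the paper leaves implicit.
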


\bigskip


\begin{thebibliography}{9}
\bibitem{BBK} N. Bourbaki: \textit{Groupes et alg\`{e}bres de Lie},
Chapitres 4,5,6, Springer (2007).

\bibitem{Dua} Duan, On the inverse Kostka matrix, Journal of Combinatorial
Theory Series A, \textbf{103}, 363-376, (2003).

\bibitem{GW} G. Goodman, N. R Wallach, \textit{Representation theory and
invariants of the classical groups}, Cambridge University Press.

\bibitem{Hum} J.~E. Humphreys. 
\newblock {\em Reflection groups and
{C}oxeter groups}. \newblock Cambridge studies in advance mathematics 
\textbf{29}, Cambridge University Press, (1990).

\bibitem{Li} D.E. Littlewood, \textit{The theory of group characters and
matrix representations of groups, }Oxford University Press, second edition
(1958).

\bibitem{Raj} C.S. Rajan, Unique decomposition of tensor products of
irreducible representations of simple algebraic groups Ann. of Math. \textbf{%
160}, no. 2, 683--704. (2004).

\bibitem{VV} R. Venkatesh and Sankaran Viswanath, Unique factorization of
tensor products for Kac--Moody algebras, Adv. in Math. \textbf{231}, no. 6,
3162--3171(2012).

\bibitem{Wil} M. Wildon, Character values and decomposition matrices of
symmetric groups, J. Algebra \textbf{319}, 3382--3397 (2008).
\end{thebibliography}
\end{document}